\newtheorem{theorem}{Theorem}[section]
\newtheorem{corollary}[theorem]{Corollary}
\newtheorem{lemma}[theorem]{Lemma}
\newtheorem{example}[theorem]{Example}
\newtheorem{conjecture}[theorem]{Conjecture}
\newcommand{\pushright}[1]{\ifmeasuring@#1\else\omit\hfill$\displaystyle#1$\fi\ignorespaces}
\newcommand{\pushleft}[1]{\ifmeasuring@#1\else\omit$\displaystyle#1$\hfill\fi\ignorespaces}
\begin{document}

\title{On simultaneous $(s, s+t, s+2t, \dots)$-core partitions}

\author{William J. Keith}
\address{Department of Mathematical Sciences, 
Michigan Technological University, Houghton, MI 49931, USA}
\email{wjkeith@mtu.edu}

\author{Rishi Nath}
\address{Department of Mathematics and Computer Science, 
York College, CUNY, Jamaica, NY 11451, USA}
\email{rnath@york.cuny.edu}

\author{James A. Sellers}
\address{Department of Mathematics and Statistics, 
University of Minnesota Duluth, Duluth, MN 55812, USA}
\email{jsellers@d.umn.edu}

\subjclass[2020]{Primary 05A17; Secondary 11P81, 11P83.}

\keywords{partitions, $t$-cores, abacus, generating functions}

\date{}

\maketitle

\begin{abstract}
We consider simultaneous $(s,s+t,s+2t,\dots,s+pt)$-core partitions in the large-$p$ limit, or (when $s<t$), partitions in which no hook may be of length $s \pmod{t}$.  We study generating functions, containment properties, and congruences when $s$ is not coprime to $t$.  As a boundary case of the general study made by Cho, Huh and Sohn, we provide enumerations when $s$ is coprime to $t$, and answer positively a conjecture of Fayers on the polynomial behavior of the size of the set of simultaneous $(s,s+t,s+2t,\dots,s+pt)$-core partitions when $p$ grows arbitrarily large. Of particular interest throughout is the comparison to the behavior of simultaneous $(s,t)$-cores. 
\end{abstract}

\section{Introduction}
A \emph{partition} $\lambda$ of a positive integer $n$ is a nonincreasing sequence of positive integers which sum to $n$.  We say that $n$ is the \emph{size} or \emph{weight} of the partition $\lambda$.  

Among the various types of integer partitions that have been studied, the $t$-core partitions are subjects of great interest and much research in combinatorics, representation theory, and related areas.  For a recent survey, see \cite{CKNS}.  The following is the \emph{Young} or \emph{Ferrers diagram} of the partition $\lambda = (5,3,2,2,1,1)$, with its \emph{hooklengths} marked: for each box in the diagram, the hooklength is the number of boxes directly right of and directly below the box, plus 1 for the box itself.

$$\young({1\!0}7421,{7 }41::,{5 }2:::,{4 }1:::,{2\,\,}::::,{1\,\,}::::)$$

One may see that the partition above lacks hooks of length 3, 6, 8, 9, and any number larger than 10.  A partition which lacks hooks of length $t$ is a $t$-\emph{core} partition, or $t$-\emph{core}  for short.  Lacking hooks of length $t_1, t_2, \dots$, a partition is a \emph{simultaneous} $(t_1,t_2,\dots)$-\emph{core}.

% Let $f_t = \prod_{k=1}^\infty (1-q^{tk})$.

% Editorial comment:  I would like to suggest that using the usual $f_t$ notation for our generating function expressions below is problematic as it could easily cause confusion with the polynomials $f_t(s)$ of Fayers.  Because of this, I would encourage us to use the usual $q$--Pochhammer symbol instead; i.e., use $(q^k ; q^k)_\infty$ instead of $f_k$.  I have changed things in the manuscript below to follow this.  If we decide to go back to the $f_k$ notation, we should be able to do so relatively quickly. 

To situate the current paper in context, here are some of the important known results concerning cores and simultaneous cores. In order to state our generating function results below, we require the following notation, often referred to as $q$-Pochhammer notation.  For $|q|<1$, let 
$$
(A;q)_\infty = (1-A)(1-Aq)(1-Aq^2)(1-Aq^3)\dots .
$$
Note also that, for the rest of the paper, given integers $s$ and $t$, we let $d=\gcd(s,t)$.

\begin{theorem}[Olsson, \cite{Olsson}] The generating function for the number $c_t(n)$ of $t$-core partitions of size $n$ is 
%$$\sum_{n=0}^\infty c_t(n) q^n= \frac{f_t^t}{f_1}.$$
$$\sum_{n=0}^\infty c_t(n) q^n= \frac{(q^t;q^t)_\infty^t}{(q;q)_\infty}.$$
\end{theorem}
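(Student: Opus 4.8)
The plan is to exploit the core--quotient decomposition of partitions via the abacus, which lets us factor the generating function for \emph{all} partitions and then solve for the $t$-core generating function.

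First I would set up the abacus (bead) model: encode a partition $\lambda$ by its set of first-column hook lengths (its beta-numbers) and display these beads on a $t$-runner abacus. The structural fact I would invoke is the Littlewood decomposition: every partition $\lambda$ corresponds bijectively to a pair consisting of its $t$-core $\tilde\lambda$, obtained by sliding all beads as high as possible on each runner, together with its $t$-quotient $(\lambda^{(0)},\dots,\lambda^{(t-1)})$, an arbitrary $t$-tuple of ordinary partitions recording how far the beads on each runner slid. The crucial weight bookkeeping then reads
$$|\lambda| = |\tilde\lambda| + t\sum_{i=0}^{t-1}|\lambda^{(i)}|.$$

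With this in hand the generating-function argument is short. Summing $q^{|\lambda|}$ over all partitions gives $1/(q;q)_\infty$ on the one hand; on the other hand the bijection factors this sum as the generating function for $t$-cores times the generating function ranging over $t$-tuples of partitions, each of whose size is inflated by the factor $t$. Since unrestricted partitions are counted by $1/(q;q)_\infty$, replacing $q$ by $q^t$ and taking a $t$-fold product accounts for the quotient and contributes $1/(q^t;q^t)_\infty^t$. Equating the two expressions,
$$\frac{1}{(q;q)_\infty} = \left(\sum_{n=0}^\infty c_t(n)q^n\right)\cdot\frac{1}{(q^t;q^t)_\infty^t},$$
and solving for the $t$-core series yields the claimed formula $\sum_{n=0}^\infty c_t(n)q^n = (q^t;q^t)_\infty^t/(q;q)_\infty$.

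The main obstacle is establishing the Littlewood decomposition rigorously: one must check (with a fixed convention on the number of beads) that pushing beads upward genuinely produces a $t$-core, that the displacement data on each runner is equivalent to an arbitrary partition, and that the weight formula above holds exactly. Once that bijection and its weight-additivity are secured, the generating-function manipulation is routine. An alternative I would keep in reserve is to parametrize $t$-cores directly by integer vectors $(n_0,\dots,n_{t-1})$ with $\sum_i n_i = 0$, express $|\lambda|$ as the quadratic form $\tfrac{t}{2}\sum_i n_i^2 + \sum_i i\,n_i$, and evaluate the resulting theta-type sum via the Jacobi triple product; but this route trades the clean bijective factorization for a more computational identity, so I would favor the core--quotient approach.
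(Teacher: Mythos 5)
Your proposal is correct: the core--quotient (Littlewood/abacus) decomposition, with the weight identity $|\lambda| = |\tilde\lambda| + t\sum_i |\lambda^{(i)}|$, immediately factors $1/(q;q)_\infty$ as the $t$-core generating function times $1/(q^t;q^t)_\infty^t$, which is exactly the classical proof of this theorem. The paper itself offers no proof---it simply cites Olsson---and the argument you give is the standard one found in that literature, so there is nothing to bridge; your noted obstacle (fixing a bead convention so that the core--quotient correspondence is a genuine weight-additive bijection) is the only technical point requiring care, and it is routine.
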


\begin{theorem}[Anderson, \cite{Anderson}] If $d=1$, the number of $(s,t)$-cores is $$\frac{1}{s+t}\binom{s+t}{t}.$$
\end{theorem}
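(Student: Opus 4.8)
The plan is to encode each partition by its \emph{beta-set} (equivalently, the bead positions on an abacus), to translate the two core conditions into a single combinatorial closure property, and then to set up an explicit bijection with a family of lattice paths whose cardinality is delivered by the cycle lemma.

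First I would recall the abacus encoding: writing $\beta(\lambda)=\{\lambda_i-i : i\geq 1\}\subseteq\mathbb{Z}$, the partition $\lambda$ is a $t$-core precisely when $\beta(\lambda)$ is closed under subtracting $t$ (that is, $x\in\beta(\lambda)$ with $x\geq t$ forces $x-t\in\beta(\lambda)$), since this is exactly the assertion that every bead on a $t$-runner abacus has been pushed to its lowest available position. Being simultaneously an $(s,t)$-core therefore amounts to $\beta(\lambda)$ being closed under subtraction of both $s$ and $t$. I would then observe that, because $d=\gcd(s,t)=1$, the numerical semigroup $\langle s,t\rangle$ omits only finitely many positive integers (its gap set $\Delta$ has size $(s-1)(t-1)/2$ by Sylvester), and that the closure condition forces the hook lengths of $\lambda$ to lie among these gaps. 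This is the step where coprimality is essential: it is precisely what makes the set of $(s,t)$-cores finite, so that the stated count is a well-defined integer.

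Next I would build Anderson's bijection between $(s,t)$-cores and monotone lattice paths from one corner to the opposite corner of an $s\times t$ rectangle that stay weakly below the main diagonal. Concretely, one labels the cells of the rectangle by the values read off along diagonals; coprimality guarantees that these labels are distinct and that the closure of $\beta(\lambda)$ under both $-s$ and $-t$ translates into the ``staircase'' shape of such a path, the removed cells forming an order ideal of the poset on $\Delta$ generated by the covering relations $x\lessdot x+s$ and $x\lessdot x+t$. Verifying that this assignment is a well-defined bijection---in particular that the below-diagonal condition is equivalent to the simultaneous closure property---is the technical heart of the argument, and the step I expect to be the main obstacle.

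Finally, with the reduction to lattice paths in hand, the enumeration is routine via the Dvoretzky--Motzkin cycle lemma. There are $\binom{s+t}{t}$ monotone paths built from $t$ east and $s$ north steps; grouping the corresponding step sequences into cyclic-rotation classes and invoking $\gcd(s,t)=1$ to conclude that each class has exactly $s+t$ distinct members, one checks that precisely one member of each class stays weakly below the diagonal. Hence the number of admissible paths, and therefore the number of $(s,t)$-cores, is $\frac{1}{s+t}\binom{s+t}{t}$, as claimed.
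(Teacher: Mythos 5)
The paper does not prove this statement at all: it is Anderson's theorem, quoted as known background with a citation to \cite{Anderson}, so there is no internal proof to compare against. Your proposal is, in outline, precisely Anderson's published argument, which is what the citation points to: encode a core by its beta-set on the abacus, note that the simultaneous closure under subtracting $s$ and $t$ confines all hook lengths to the finite gap set of the numerical semigroup $\langle s,t\rangle$ (whence finiteness, using coprimality), biject the cores with order ideals of the gap poset realized as monotone lattice paths in an $s\times t$ rectangle constrained by the diagonal, and count those paths by the Dvoretzky--Motzkin cycle lemma, each cyclic class having $s+t$ distinct members since $\gcd(s,t)=1$. The one technical slip worth fixing: with the convention $\beta(\lambda)=\{\lambda_i-i : i\geq 1\}$ the beta-set is infinite downward, and the $t$-core condition is closure under subtracting $t$ for \emph{every} element, with no guard $x\geq t$; as you state it, $\lambda=(2)$ with $t=2$ has $\beta=\{1,-2,-3,\dots\}$ and satisfies your guarded condition vacuously, yet $(2)$ has a hook of length $2$. (The guarded form is correct only for the finite beta-set of first-column hook lengths.) You also rightly flag that verifying the path bijection is the real work; that verification is exactly the content of Anderson's paper, and the rest of your outline, including the cycle-lemma count $\frac{1}{s+t}\binom{s+t}{t}$, is sound.
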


\begin{theorem}[Aukerman, Kane, and Sze, \cite{AKS}] 
\label{thm:AKS} 
If $d>1$, then the generating function for the number $c_{(s,t)}(n)$ of $(s,t)$-cores is 
%$$C_{s,t}(q) = \sum_{n=0}^\infty c_{(s,t)}(n) q^n = \frac{f_d^d}{f_1} C_{s/d,t/d}(q^d)^d.$$
$$C_{s,t}(q) = \sum_{n=0}^\infty c_{(s,t)}(n) q^n = \frac{(q^d;q^d)_\infty^d}{(q;q)_\infty} C_{s/d,t/d}(q^d)^d.$$\end{theorem}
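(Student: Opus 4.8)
The plan is to factor the generating function using the classical $d$-core/$d$-quotient decomposition, realized through the abacus (beta-set) model. First I would fix a beta-set for $\lambda$ on an abacus with $d$ runners, so that $\lambda$ is encoded by a bead configuration and distributing the beads among the $d$ runners produces its $d$-core $\tilde\lambda$ together with its $d$-quotient $(\lambda^{(0)},\dots,\lambda^{(d-1)})$. The foundational fact I would invoke is that the map $\lambda\mapsto(\tilde\lambda,\lambda^{(0)},\dots,\lambda^{(d-1)})$ is a bijection and that weight is tracked by $|\lambda| = |\tilde\lambda| + d\sum_{i=0}^{d-1}|\lambda^{(i)}|$.

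The heart of the argument is the hook-length correspondence. Removing a rim hook of length $h$ from $\lambda$ corresponds to sliding a bead from an occupied position $p$ to the empty position $p-h$; when $h = kd$ this slide stays on a single runner and moves the bead down $k$ rows. I would use this to show that $\lambda$ has a hook of length $kd$ precisely when some runner --- equivalently some quotient component $\lambda^{(i)}$ --- has a hook of length $k$. Since $s = d(s/d)$ and $t = d(t/d)$ are both multiples of $d$, it follows that $\lambda$ avoids hooks of lengths $s$ and $t$ if and only if every $\lambda^{(i)}$ is an $(s/d,t/d)$-core. Because the hooks of $\lambda$ of length divisible by $d$ are exactly those captured by the $d$-quotient, the $d$-core $\tilde\lambda$ (which by definition has no hook of length divisible by $d$) is subject to no restriction from $s$ and $t$; thus $\tilde\lambda$ ranges freely over all $d$-cores.

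With this structural equivalence in hand, the generating function factors over the bijection as
\[
C_{s,t}(q) = \left(\sum_{\tilde\lambda} q^{|\tilde\lambda|}\right)\prod_{i=0}^{d-1}\left(\sum_{\lambda^{(i)}} q^{d|\lambda^{(i)}|}\right),
\]
where $\tilde\lambda$ runs over all $d$-cores and each $\lambda^{(i)}$ runs over all $(s/d,t/d)$-cores. The first factor is the generating function for $d$-cores, which equals $\frac{(q^d;q^d)_\infty^d}{(q;q)_\infty}$ by Olsson's theorem applied with $t=d$; and each of the $d$ identical factors in the product equals $C_{s/d,t/d}(q^d)$, since each cell of a quotient component contributes $d$ to $|\lambda|$. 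Multiplying these together yields the claimed identity.

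The main obstacle I anticipate is making the hook-length correspondence of the second paragraph fully rigorous --- in particular, verifying that avoiding the two specific hook lengths $s$ and $t$ in $\lambda$ corresponds \emph{exactly} to each quotient component being an $(s/d,t/d)$-core, and confirming that the $d$-core part is genuinely unconstrained (so that the set of $(s,t)$-cores is infinite and $C_{s,t}(q)$ is a true power series rather than a polynomial). This hinges on the precise bead-sliding interpretation of rim-hook removal on the multi-runner abacus and on the classical identification of the hooks of $\lambda$ of length divisible by $d$ with the hooks of the $d$-quotient.
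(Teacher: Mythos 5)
Your proposal is correct: the $d$-core/$d$-quotient abacus decomposition, with the observation that hooks of length divisible by $d$ in $\lambda$ correspond exactly to hooks of the quotient components (so the $d$-core factor is unconstrained and the quotient components must be $(s/d,t/d)$-cores), is precisely the argument underlying this result. Note that the paper itself does not prove this theorem but imports it from Aukerman--Kane--Sze \cite{AKS} (and invokes the same logic, via \cite{AKS} and \cite{GNS}, for its analogous Theorem \ref{stgenfun}), and the proof in those references is essentially the one you have reconstructed.
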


Kane \cite{KaneThesis} conjectured, and Olsson and Stanton proved \cite{OlssonStanton}, that when $d = 1$, the largest $(s,t)$-core is of size $(s^2-1)(t^2-1)/24$.  While this gives its degree, we note that otherwise, for general coprime $s$ and $t$, little is known about the form of the polynomial $C_{s,t}(q)$.

Finally, if we increase the number of restrictions even less is known.  The $(t_1,t_2,t_3)$-cores are mostly studied for the case of $(s,s+t,s+2t)$-cores, and likewise for longer arithmetic progressions, the $(s,s+t,s+2t,\dots, s+pt)$-cores. For these an enumeration is known in the case of $s$ coprime to $t$, but there is no known general generating function.

\begin{theorem}[Cho, Huh, and Sohn, \cite{CHS}] 
\label{CHSthm}
If $d=1$, then the number of $(s,s+t,s+2t,\dots,s+pt)$-core partitions is $$\frac{1}{s+t}\binom{s+t}{t} + \sum_{k=1}^{\lfloor s/2 \rfloor} \sum_{\ell=0}^r \frac{1}{k+t}\binom{k+t}{k-\ell}\binom{k-1}{\ell}\binom{s+t-\ell(p-2)-1}{2k+t-1},$$
where $r=\min\left(k-1,\lfloor\frac{s-2k}{p-2}\rfloor\right)$.
\end{theorem}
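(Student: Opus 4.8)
The plan is to prove this enumeration of $(s,s+t,\dots,s+pt)$-core partitions when $\gcd(s,t)=1$, following the abacus/beta-set approach that underlies Anderson's theorem. I would begin by recalling that a $t$-core corresponds to a choice of beta-set, which one records on the $t$-runner abacus, and that simultaneous core conditions translate into simultaneous constraints on gaps between beads. The key combinatorial translation is that being a simultaneous $(s, s+t, \dots, s+pt)$-core means forbidding hooks of length $s, s+t, \dots, s+pt$; geometrically, on the number line of first-column hooklengths (or the associated beta-set / order ideal in the root lattice of type $\widetilde{A}$), this forbids beads and gaps separated by exactly these values. As $p$ is fixed but the progression is long, the effective constraint is that no two elements of the beta-set differ by a value congruent to $s \pmod t$ up to the cutoff $s+pt$.

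The main structural idea I would exploit is the bijection, developed by Cho–Huh–Sohn and anticipated by Anderson, between these simultaneous cores and certain lattice paths or equivalently order ideals in a poset shaped like a shifted staircase. First I would set up the poset $P_{s,t}$ whose linear extensions or antichains/order ideals are counted by the $(s,t)$-core count $\frac{1}{s+t}\binom{s+t}{t}$, recovering Anderson's theorem as the $p=\infty$ (or $p$ large) base term. Then I would stratify the additional cores — those that are $(s,t)$-cores but fail to remain cores when we truncate the progression at $s+pt$ — by a statistic measuring how the forbidden longer hooks interact. Concretely, I would introduce the parameters $k$ and $\ell$ appearing in the formula: $k$ indexing the "size" of a ballot-type substructure (ranging to $\lfloor s/2\rfloor$) and $\ell$ counting a secondary feature, and identify the product $\frac{1}{k+t}\binom{k+t}{k-\ell}\binom{k-1}{\ell}\binom{s+t-\ell(p-2)-1}{2k+t-1}$ as counting configurations with exactly those parameters via a rational-Catalan / cycle-lemma argument for the $\frac{1}{k+t}\binom{k+t}{k-\ell}$ factor, a Narayana-style refinement for $\binom{k-1}{\ell}$, and a stars-and-bars count for the remaining gap-lengths giving $\binom{s+t-\ell(p-2)-1}{2k+t-1}$.

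The crux of the argument, and the step I expect to be the main obstacle, is establishing the bijection that justifies precisely this decomposition: one must show that every simultaneous $(s,s+t,\dots,s+pt)$-core decomposes uniquely into an $(s,t)$-core part (contributing the leading term) plus a correction indexed by $(k,\ell)$, and that the constraints imposed by the intermediate moduli $s+t, s+2t, \dots, s+pt$ are captured exactly by the shifts $\ell(p-2)$ in the final binomial. This requires a careful analysis of which beta-set gap configurations survive all $p$ constraints simultaneously, and in particular why the dependence on $p$ is linear in the exponent $\ell(p-2)$ rather than appearing more intricately — this linearity is what ultimately forces the polynomial-in-$p$ behavior that the abstract predicts. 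I would verify the cycle-lemma factor $\frac{1}{k+t}\binom{k+t}{k-\ell}$ by exhibiting a free cyclic action on the relevant lattice paths, and then confirm the whole identity against small cases (for instance $p=1$, where the formula must collapse to the plain $(s,t)$-core count, and $s=1,2,3$) before assembling the strata into the stated sum.
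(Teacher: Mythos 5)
The paper does not prove this statement at all; it is imported from Cho--Huh--Sohn \cite{CHS}, whose actual argument runs through a bijection with rational Motzkin paths. Your outline is in the same spirit (beta-sets on the abacus, then a lattice-path bijection, then counting), but as written it is a plan rather than a proof: every load-bearing step is deferred in the conditional. You yourself identify the crux --- ``establishing the bijection that justifies precisely this decomposition'' --- and then do not establish it. You never define the statistics that $k$ and $\ell$ record, never exhibit the cyclic action that would justify the factor $\frac{1}{k+t}\binom{k+t}{k-\ell}$, never prove the stars-and-bars reading of $\binom{s+t-\ell(p-2)-1}{2k+t-1}$, and never define the upper limit $r$ of the inner sum at all (in CHS it is pinned down by where that last binomial coefficient vanishes, and it is exactly what makes the formula degenerate to the $r=0$ case in the large-$p$ limit that this paper uses). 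Naming the three binomial factors after familiar counting paradigms is not a substitute for the bijection that produces them.

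There is also a concrete structural error in the one place where you do commit to a decomposition. You propose to stratify ``the additional cores --- those that are $(s,t)$-cores but fail to remain cores when we truncate the progression at $s+pt$.'' This is backwards. Since a partition avoiding hooks of lengths $a$ and $b$ also avoids hooks of length $a+b$, every $(s,t)$-core automatically avoids $s+t, s+2t, \dots$, so the $(s,t)$-cores form a \emph{subset} of the $(s,s+t,\dots,s+pt)$-cores; truncating the progression weakens the constraint rather than strengthening it. The double sum must therefore count the partitions that are cores for the finite progression but are \emph{not} $(s,t)$-cores (for instance, those containing a hook of length $t$, or a hook congruent to $s$ modulo $t$ exceeding $s+pt$), and any correct proof has to parametrize exactly this excess set --- which is where the dependence on $p$ enters. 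Your proposed sanity check is wrong for the same reason: at $p=1$ the formula must return the number of $(s,s+t)$-cores, which by Anderson's theorem is $\frac{1}{2s+t}\binom{2s+t}{s}$, not ``the plain $(s,t)$-core count'' $\frac{1}{s+t}\binom{s+t}{t}$; for example there are twelve $(3,7)$-cores but only five $(3,4)$-cores. Until the stratification is stated in the correct direction and the bijection for the excess set is actually constructed, the proposal does not prove the theorem.
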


In this paper we consider the $p \rightarrow \infty$ boundary case of $(s,s+t,s+2t,\dots, s+pt)$-cores.  If $s<t$, this means we are interested in partitions with hooks that avoid the residue class $s \pmod{t}$, and hence we name these $s \pmod{t}$-\emph{cores}, even when $s>t$.  (When the latter case arises we will be careful to note the elision.)  In this nomenclature one may consider $t$-cores to be $0 \pmod{t}$-cores.

When $s$ and $t$ are coprime, the set of $s \pmod{t}$-cores is finite, and the enumeration of the set is known by taking the limiting value for $p$ in the formula of Cho, Huh and Sohn.  Therefore a refined problem is to seek a description of generating functions of $s \pmod{t}$-cores.

Denote the number of $s \pmod{t}$-cores of $n$ by $c_{s(t)}(n)$, and their generating function by $C_{s(t)}(q) = \sum_{n=0}^\infty c_{s(t)}(n) q^n$.  We have the following.

\begin{lemma}\label{basiclemma} If $d = 1$, then $C_{s(t)}(q)$ is a polynomial. If $d>1$, then $C_{s(t)}(q)$ is an infinite series.\end{lemma}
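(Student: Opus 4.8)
The plan is to reduce the stated dichotomy to a purely set-theoretic statement about the collection of $s \pmod t$-cores. Since there are only finitely many partitions of each fixed size $n$, the coefficient $c_{s(t)}(n)$ is finite for every $n$; hence $C_{s(t)}(q)$ is a polynomial exactly when there are finitely many $s \pmod t$-cores in total (their sizes are then bounded), and it is a genuine infinite series exactly when there are infinitely many such cores (which forces $c_{s(t)}(n)>0$ for infinitely many $n$). So it suffices to decide finiteness of the set of $s \pmod t$-cores according to whether $d=1$ or $d>1$, and I would do this by sandwiching this set between two better-understood families of cores.

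For the case $d=1$, the key observation is that the hook lengths we forbid are exactly the positive integers congruent to $s$ modulo $t$; in particular both $s$ and $s+t$ are forbidden. Thus every $s \pmod t$-core is in particular an $(s,s+t)$-core. Since $\gcd(s,s+t)=\gcd(s,t)=1$, Anderson's theorem applies to the pair $(s,s+t)$ and shows there are only $\frac{1}{2s+t}\binom{2s+t}{s+t}$ such cores, a finite number. A subset of a finite set is finite, so there are finitely many $s \pmod t$-cores and $C_{s(t)}(q)$ is a polynomial.

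For the case $d>1$, I would argue in the opposite direction, exhibiting infinitely many $s \pmod t$-cores. Here the crucial remark is that every forbidden hook length is divisible by $d$: since $d \mid s$ and $d \mid t$, every integer $\equiv s \pmod t$ is $\equiv 0 \pmod d$. Consequently any partition with no hook length divisible by $d$ — that is, any $d$-core — automatically has no forbidden hook, so every $d$-core is an $s \pmod t$-core. By Olsson's theorem the generating function for $d$-cores is $(q^d;q^d)_\infty^d/(q;q)_\infty$, which for $d \ge 2$ is not a polynomial; equivalently there are infinitely many $d$-core partitions. These are all $s \pmod t$-cores, so the set of $s \pmod t$-cores is infinite and $C_{s(t)}(q)$ is an infinite series.

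The two inclusions are elementary once stated, so the real content — and the step I would be most careful about — is the hook-length bookkeeping in each direction: verifying that $s$ and $s+t$ are genuinely forbidden lengths (so the containment in $(s,s+t)$-cores is valid, including the mild elision when $s>t$), and that every forbidden length is a multiple of $d$ (so that every $d$-core qualifies). I would also make explicit the standing translation between ``finite/infinite set of cores'' and ``polynomial/infinite-series generating function,'' since that is what the statement is really asserting; both halves follow from the fact that each size $n$ supports only finitely many partitions.
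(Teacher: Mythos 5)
Your proposal is correct and follows essentially the same route as the paper: for $d=1$ it embeds the $s \pmod t$-cores in the finite set of $(s,s+t)$-cores via Anderson's theorem (using $\gcd(s,s+t)=\gcd(s,t)=1$), and for $d>1$ it observes that every $d$-core is an $s \pmod t$-core since all forbidden hook lengths are multiples of $d$, so the infinitude of $d$-cores settles the claim. The only difference is cosmetic: you make explicit the translation between finiteness of the set of cores and polynomiality of the generating function, which the paper leaves implicit.
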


\begin{theorem}\label{stgenfun} For any $s$ and $t$, $$C_{s(t)}(q) = \frac{(q^d;q^d)_\infty^d}{(q;q)_\infty} C_{s/d(t/d)}(q^d)^d.$$
\end{theorem}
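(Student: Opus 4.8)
The plan is to mirror the structure of the proof of Theorem~\ref{thm:AKS} for ordinary $(s,t)$-cores, exploiting the $d$-core/$d$-quotient decomposition. The crucial starting observation is that since $d=\gcd(s,t)$ divides both $s$ and $t$, every forbidden hooklength $s+kt$ (for $k\geq 0$) is divisible by $d$. Thus the entire constraint defining an $s\pmod{t}$-core concerns only hooks whose length is a multiple of $d$, which is precisely the data carried by the $d$-quotient.

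First I would recall the classical bijection $\lambda\mapsto(\lambda_{(d)};\lambda^{(0)},\dots,\lambda^{(d-1)})$ sending a partition to its $d$-core together with its $d$-quotient, under which the weight decomposes as $|\lambda|=|\lambda_{(d)}|+d\sum_{i=0}^{d-1}|\lambda^{(i)}|$. Reading this bijection off the $d$-runner abacus, a hook of $\lambda$ whose length is divisible by $d$ corresponds to a bead move confined to a single runner: a move of $dh$ positions on runner $i$ (that is, across $h$ gaps) is a hook of length $h$ in the component $\lambda^{(i)}$. Hence the hooks of $\lambda$ of length $s+kt=d(s/d+k\,t/d)$ are in length-scaling bijection with the hooks of the quotient components of length $s/d+k\,t/d$. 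Since the $d$-core $\lambda_{(d)}$ possesses no hook divisible by $d$, it can never contribute a forbidden hook. I would therefore conclude that $\lambda$ is an $s\pmod{t}$-core if and only if its $d$-core is arbitrary and each quotient component $\lambda^{(i)}$ is an $(s/d)\pmod{t/d}$-core.

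With this characterization in hand, the generating function factors. Summing $q^{|\lambda|}$ over all $s\pmod{t}$-cores and using the weight formula splits the sum into a product of an unconstrained sum over $d$-cores and $d$ independent copies of the sum over $(s/d)\pmod{t/d}$-cores, each weighted by $q^{d|\lambda^{(i)}|}$. The first factor equals $\frac{(q^d;q^d)_\infty^d}{(q;q)_\infty}$ by Olsson's theorem applied with $d$ in place of $t$, while replacing each component's weight by $d$ times its size induces the substitution $q\mapsto q^d$, producing the factor $C_{s/d(t/d)}(q^d)^d$. Multiplying these yields the claimed identity. As a sanity check, when $d=1$ the prefactor degenerates to $1$ and the quotient has a single unrestricted component, so the identity is tautological and consistent with Lemma~\ref{basiclemma}; when $d>1$ the prefactor is a genuine infinite series, matching the dichotomy recorded there.

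The step I expect to demand the most care is the hooklength correspondence under the $d$-quotient: I must verify precisely that the boxes of $\lambda$ with hook divisible by $d$ are matched bijectively with the boxes of the tuple $(\lambda^{(0)},\dots,\lambda^{(d-1)})$ in a way that divides every such hooklength by $d$, and in particular that the $d$-core contributes no hook divisible by $d$ whatsoever. This is standard abacus bookkeeping, but it is the load-bearing input; once it is secured, the remainder is a formal manipulation of generating functions.
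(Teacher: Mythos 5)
Your proof is correct and is essentially the same argument the paper relies on: the paper's own ``proof'' simply defers to Theorem 4.1 of Aukerman--Kane--Sze and Corollary 1.8 of Gramain--Nath--Sellers, whose underlying mechanism is exactly the $d$-core/$d$-quotient decomposition you spell out (forbidden hooks all divisible by $d$, hence carried entirely by the quotient components, with the $d$-core unconstrained). In effect you have written out in full the argument the paper cites by reference.
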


We note, in passing, the similar structure of the generating functions given in Theorem \ref{thm:AKS} and Theorem \ref{stgenfun}.

The degree of $C_{s(t)}(q)$ can be given in the very restricted case $t=1$.  It could be derived as the large $p$-limit of a formula of Xiong \cite{Xiong} on the largest size of $(s,s+1,\dots,s+p)$-cores, but an elementary isoperimetric argument suffices to establish that the largest size of any partition avoiding hooks of length $s$ or greater is that of a square or near-square: either $\frac{s^2}{4}$ or $\frac{s^2-1}{4}$, as $s$ is even or odd respectively.

By Theorem \ref{stgenfun}, information about the coprime case yields generating functions for the non-coprime case.  Since the enumerating function in a non-coprime case is positive for an infinite number of input values, a type of property of greater interest for these is congruences for the number of $s \pmod{t}$-cores of size $An+B$.  We prove several such congruences below.

In Section \ref{FayersSec} of the paper, we consider the following conjecture of Fayers \cite{Fayers} on a surprising pattern in the enumeration of $s \pmod{t}$-cores.

\begin{conjecture}[Fayers \cite{Fayers}]\label{fayersconj} Suppose $t \geq 1$.  Then there is a monic polynomial $f_t(s)$ of degree $t-1$ with non-negative integer coefficients such that for any $s \geq 1$ coprime to $t$, the number of $(s,s+t,s+2t,\dots)$-cores is $$\frac{2^{s-t}f_t(s)}{t!}.$$ The constant term of $f_t(s)$ is $(2^t-1)(t-1)!$. \end{conjecture}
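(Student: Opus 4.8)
The plan is to start from the enumeration guaranteed by Theorem \ref{CHSthm} and pass to the limit $p\to\infty$. When $s$ is coprime to $t$, the number of $(s,s+t,s+2t,\dots)$-cores is the large-$p$ value of the Cho--Huh--Sohn formula; as $p$ grows, every summand with $\ell\ge1$ carries a factor $\binom{s+t-\ell(p-2)-1}{2k+t-1}$ whose top entry is eventually negative, so only the $\ell=0$ terms survive. Using the Catalan-type simplification $\frac{1}{k+t}\binom{k+t}{k}=\frac1t\binom{k+t-1}{t-1}$ on the surviving terms (and on the leading term, which is the $k=0$ case), the count collapses to $\tfrac1t S(s,t)$, where
\begin{equation*}
S(s,t)=\sum_{k\ge0}\binom{k+t-1}{t-1}\binom{s+t-1}{2k+t-1}.
\end{equation*}
It therefore suffices to prove that $S(s,t)=2^{s-t}f_t(s)/(t-1)!$ for a monic polynomial $f_t$ of degree $t-1$ with the stated constant term and non-negative integer coefficients, since then the count equals $2^{s-t}f_t(s)/t!$.

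The crux is to compute the ordinary generating function of $S(\,\cdot\,,t)$ in $s$. First I would record the two elementary expansions $\sum_{s\ge0}\binom{s+t-1}{2k+t-1}u^s=u^{2k}(1-u)^{-(2k+t)}$ and $\sum_{k\ge0}\binom{k+t-1}{t-1}z^k=(1-z)^{-t}$. Substituting the first into the definition of $S$, interchanging the order of summation, and applying the second with $z=u^2/(1-u)^2$ gives
\begin{equation*}
\sum_{s\ge0}S(s,t)\,u^s=\frac{1}{(1-u)^t}\cdot\frac{(1-u)^{2t}}{\bigl((1-u)^2-u^2\bigr)^t}=\left(\frac{1-u}{1-2u}\right)^{t},
\end{equation*}
where the final equality uses the collapse $(1-u)^2-u^2=1-2u$. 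This telescoping is the key observation, and I expect the genuine work to lie in setting it up cleanly; everything afterward is bookkeeping. A pleasant by-product is that the generating function is manifestly a clean function of $s$, so the apparent dependence on the parity of $s$ coming from the floor $\lfloor s/2\rfloor$ evaporates.

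To extract coefficients I would rewrite $\frac{1-u}{1-2u}=\tfrac12\bigl(1+\tfrac{1}{1-2u}\bigr)$, expand the $t$-th power by the binomial theorem, and use $(1-2u)^{-j}=\sum_s\binom{s+j-1}{j-1}2^su^s$. For $s\ge1$ this yields
\begin{equation*}
S(s,t)=2^{s-t}\sum_{j=1}^{t}\binom{t}{j}\binom{s+j-1}{j-1},
\end{equation*}
the $j=0$ term (a constant) contributing nothing. I would then set $f_t(s)=(t-1)!\sum_{j=1}^{t}\binom{t}{j}\binom{s+j-1}{j-1}$, which by construction gives the count $2^{s-t}f_t(s)/t!$.

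Finally I would verify the three asserted properties of $f_t$. Each term $(t-1)!\binom{s+j-1}{j-1}=\frac{(t-1)!}{(j-1)!}(s+1)(s+2)\cdots(s+j-1)$ is a product of an integer $\frac{(t-1)!}{(j-1)!}\in\mathbb{Z}_{\ge0}$ (since $j\le t$) with a monic polynomial having non-negative integer coefficients, so $f_t$ has non-negative integer coefficients and degree $t-1$, the top degree arising solely from $j=t$; its leading coefficient is $(t-1)!\cdot\frac{1}{(t-1)!}=1$, so $f_t$ is monic. The constant term is $f_t(0)=(t-1)!\sum_{j=1}^{t}\binom{t}{j}\binom{j-1}{j-1}=(t-1)!\sum_{j=1}^{t}\binom{t}{j}=(2^t-1)(t-1)!$, as claimed. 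The main obstacle, as noted, is the generating-function collapse; the remaining verifications are routine.
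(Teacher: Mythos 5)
Your proof is correct, and its central step is genuinely different from the paper's. Both arguments begin the same way: pass to the large-$p$ limit of the Cho--Huh--Sohn formula, keep only the $\ell=0$ terms, and use $\frac{1}{k+t}\binom{k+t}{k}=\frac{1}{t}\binom{k+t-1}{t-1}$ to write the count as $\frac{1}{t}\sum_{k\ge 0}\binom{k+t-1}{t-1}\binom{s+t-1}{2k+t-1}$. From there the paper works by recurrence-matching with computer algebra: Maple's \emph{sumrecursion} produces a second-order recurrence in $t$ satisfied by this sum, the candidate expression $2^{s-t}f_t(s)/t!$ (with $f_t$ the Lah-number polynomial of Theorem \ref{fayersthm}) is shown to satisfy the equivalent recurrence, and the initial conditions at $t=1,2$ are checked by hand with binomial identities. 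You instead compute the ordinary generating function in $s$, obtaining the closed form $\left(\frac{1-u}{1-2u}\right)^{t}$, and extract coefficients via $\frac{1-u}{1-2u}=\frac12\left(1+\frac{1}{1-2u}\right)$ to get $S(s,t)=2^{s-t}\sum_{j=1}^{t}\binom{t}{j}\binom{s+j-1}{j-1}$ for $s\ge 1$; all of these manipulations check out (the interchange of summation is legitimate for formal power series, and the infinite sum over $k$ agrees with the truncated one since $\binom{s+t-1}{2k+t-1}=0$ for $k>\lfloor s/2\rfloor$). Moreover your polynomial is literally the paper's: $\frac{(t-1)!}{(j-1)!}\binom{t}{j}=\frac{t!}{j!}\binom{t-1}{j-1}=L(t,j)$, so your argument proves not only Conjecture \ref{fayersconj} but also the explicit Lah-number formula of Theorem \ref{fayersthm}. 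What your route buys: it is self-contained and human-verifiable with no computer algebra, and the rational closed form $\left(\frac{1-u}{1-2u}\right)^t$ is an attractive structural fact in its own right (its multiplicative form could plausibly help with the still-open Conjecture \ref{fayersconj2}). What the paper's route buys: once the candidate polynomials are guessed, the recurrences are generated automatically, so essentially no identity-finding ingenuity is needed. The one informal step you share with the paper is the passage to the $p\to\infty$ limit (discarding the $\ell\ge 1$ terms, which the paper phrases as $r=0$), so this is not a gap relative to the published argument.
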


\noindent \textbf{Remark:} Assuming the conjecture, a finite number of data points can be used to fit initial cases of what the polynomials $f_t(s)$ must be:

\begin{align*}
f_1(s) &= 1 \\
f_2(s) &= s+3 \\
f_3(s) &= s^2 + 9s + 14 \\
f_4(s) &= s^3 + 18s^2 + 83s + 90 \\
\vdots &
\end{align*}

\noindent Fayers \cite{Fayers} gave examples of these polynomials $f_t(s)$ up to $t=11$ in factored form.  Upon multiplication, we located the coefficients in the Online Encyclopedia of Integer Sequences \cite{OEIS} at sequence number A079638.
We are able to show Conjecture \ref{fayersconj} holds.  We are also able to identify the polynomials involved.  We find that, in the rising-factorial basis instead of the standard basis, they have a startlingly concise presentation.

\begin{theorem}\label{fayersthm} Conjecture \ref{fayersconj} holds.  The polynomials required are given by
\begin{equation}
\label{FayersPolys}
f_t(s) = \sum_{m=1}^t L(t,m) (s+1)^{\overline{m-1}},
\end{equation} where the $L(t,m)$ are the unsigned Lah numbers defined for $t \geq m \geq 1$ by $$L(t,m) = \frac{t!}{m!} \binom{t-1}{m-1}$$ with $x^{\overline{0}} = 1$ for $x \geq 0$ and, for $m \geq 1$, $x^{\overline{m}} := x(x+1)(x+2) \dots (x+m-1)$.   (These expressions $x^{\overline{m}}$ are often referred to as \emph{rising factorials}.) 
\end{theorem}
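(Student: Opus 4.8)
The plan is to begin from the Cho--Huh--Sohn count (Theorem \ref{CHSthm}) and pass to the limit $p\to\infty$, which by definition produces the number $N(s,t)$ of $s\pmod t$-cores when $d=1$. For fixed $s$, once $p$ is large the inner summation range collapses to $\ell=0$, since the factor $\binom{s+t-\ell(p-2)-1}{2k+t-1}$ is no longer a genuine (nonnegative-top) term for $\ell\ge1$; thus every $\ell\ge1$ contribution disappears in the limit. Since $\binom{k-1}{0}=1$ and the leading term $\tfrac{1}{s+t}\binom{s+t}{t}$ is exactly the $k=0$ instance of $\tfrac{1}{k+t}\binom{k+t}{k}\binom{s+t-1}{2k+t-1}$, the count collapses to the single sum
\[
N(s,t)=\sum_{k\ge 0}\frac{1}{k+t}\binom{k+t}{k}\binom{s+t-1}{2k+t-1}=\frac{1}{t}\sum_{k\ge 0}\binom{k+t-1}{t-1}\binom{s+t-1}{2k+t-1},
\]
where the range may be taken as all $k\ge0$ because $\binom{s+t-1}{2k+t-1}=0$ as soon as $2k>s$.

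The heart of the matter is to evaluate this sum in closed form. I would write $\binom{s+t-1}{2k+t-1}=[u^{2k+t-1}](1+u)^{s+t-1}$ and feed in the generating function $\sum_{k\ge0}\binom{k+t-1}{t-1}x^{k}=(1-x)^{-t}$; the resulting bisection collapses the $k$-sum to a single residue,
\[
\sum_{k\ge0}\binom{k+t-1}{t-1}\binom{s+t-1}{2k+t-1}=\operatorname*{Res}_{w=1}\frac{(1+w)^{s-1}w^{t}}{(w-1)^{t}}=[u^{t-1}]\,(2+u)^{s-1}(1+u)^{t},
\]
the final form following from the substitution $w=1+u$ and the order-$t$ residue formula. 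Writing $(2+u)^{s-1}=2^{s-1}(1+u/2)^{s-1}$ exhibits this coefficient as $2^{s-1}$ times a polynomial in $s$ of degree $t-1$; hence $t!\,N(s,t)/2^{s-t}$ is a polynomial $f_t(s)$ of degree $t-1$, which already proves the polynomial-behavior half of the conjecture.

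To pin down $f_t$ I would first clear the spurious power of two by the substitution $u=2v$, turning the previous coefficient into
\[
f_t(s)=(t-1)!\,[v^{t-1}]\,(1+v)^{s-1}(1+2v)^{t}.
\]
Expanding $(1+2v)^{t}=\big((1+v)+v\big)^{t}=\sum_{k}\binom{t}{k}(1+v)^{t-k}v^{k}$ and extracting the coefficient gives
\[
f_t(s)=(t-1)!\sum_{k=0}^{t-1}\binom{t}{k}\binom{s+t-1-k}{t-1-k}.
\]
Reindexing by $m=t-k$ and using $\binom{s+m-1}{m-1}=(s+1)^{\overline{m-1}}/(m-1)!$ turns this into $f_t(s)=\sum_{m=1}^{t}\tfrac{(t-1)!}{(m-1)!}\binom{t}{m}(s+1)^{\overline{m-1}}$, and the elementary identity $\tfrac{(t-1)!}{(m-1)!}\binom{t}{m}=\tfrac{t!}{m!}\binom{t-1}{m-1}=L(t,m)$ produces exactly \eqref{FayersPolys}.

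The remaining assertions of Conjecture \ref{fayersconj} read off from \eqref{FayersPolys}. The top summand $m=t$ contributes $L(t,t)(s+1)^{\overline{t-1}}$ with $L(t,t)=1$, so $f_t$ is monic of degree $t-1$; each rising factorial $(s+1)^{\overline{m-1}}=(s+1)(s+2)\cdots(s+m-1)$ has nonnegative integer coefficients and each $L(t,m)$ is a nonnegative integer, so $f_t$ has nonnegative integer coefficients; and setting $s=0$ yields the constant term $(t-1)!\sum_{m=1}^{t}\binom{t}{m}=(2^{t}-1)(t-1)!$, as claimed. The one genuinely delicate step is the bisection/residue evaluation, where I must justify the interchange of summation and integration and the choice of contour $|w|>1$; once that is secured, the substitutions $u=2v$ and $(1+2v)^t=\big((1+v)+v\big)^t$ are precisely what dissolve the powers of $2$ and expose the Lah-number structure.
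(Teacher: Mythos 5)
Your proof is correct, but it takes a genuinely different route from the paper's. Both arguments start identically: pass to the large-$p$ limit of Theorem \ref{CHSthm} (the paper phrases this as ``$r=0$ in the limit,'' you as the vanishing of the $\ell\geq 1$ binomials; these amount to the same observation) and absorb the leading term into the sum, giving $g_t(s)=\frac{1}{t}\sum_{k\geq 0}\binom{k+t-1}{t-1}\binom{s+t-1}{2k+t-1}$. From there the paper never evaluates this sum in closed form: it runs both $g_t(s)$ and the conjectured expression $g_t^{\ast}(s)=\frac{2^{s-t}}{t!}\sum_{m}L(t,m)(s+1)^{\overline{m-1}}$ through Maple's \emph{sumrecursion} (Zeilberger-style machinery), finds that both satisfy the same second-order recurrence $2t\,g_t(s)=(s+3(t-1))\,g_{t-1}(s)-(t-2)\,g_{t-2}(s)$ in $t$, and finishes by checking the initial cases $t=1,2$ by hand. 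You instead evaluate the sum directly: the contour/coefficient-extraction step gives $\sum_{k}\binom{k+t-1}{t-1}\binom{s+t-1}{2k+t-1}=[u^{t-1}]\,(2+u)^{s-1}(1+u)^{t}$, and after the substitution $u=2v$ the expansion $(1+2v)^{t}=\bigl((1+v)+v\bigr)^{t}$ produces the Lah-number form \eqref{FayersPolys} directly. I verified your computations: the residue identity is right (it reproduces $2^{s-1}$ at $t=1$, $2^{s-2}(s+3)$ at $t=2$, and $f_3(s)=s^2+9s+14$), and the binomial reindexing $\frac{(t-1)!}{(m-1)!}\binom{t}{m}=L(t,m)$ is correct. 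The delicate step you flag is easily secured: the identity $\binom{s+t-1}{2k+t-1}=\frac{1}{2\pi i}\oint_{|u|=R}(1+u)^{s+t-1}u^{-2k-t}\,du$ holds for every radius $R>0$ and every $k\geq 0$, so one may fix $R>1$, justify the interchange by the Weierstrass $M$-test on the compact contour, and note that the only pole of $(1+u)^{s-1}u^{t}(u-1)^{-t}$ inside $|u|=R$ is the order-$t$ pole at $u=1$. The trade-off between the two proofs: the paper's WZ-style argument is mechanical and robust but is a \emph{verification} --- it requires knowing formula \eqref{FayersPolys} in advance and leans on computer algebra --- whereas yours is computer-free, \emph{derives} the closed form rather than certifying it, and makes transparent where the factor $2^{s-t}$ and the Lah numbers come from; your intermediate representation $f_t(s)=(t-1)!\,[v^{t-1}]\,(1+v)^{s-1}(1+2v)^{t}$ also seems like a promising tool for attacking the still-open Conjecture \ref{fayersconj2}.
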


We note here four additional results which will be pivotal in the work below.  We begin with a crucial result that allows us to manipulate generating functions in a straightforward manner with an eye towards proving congruences.  

\begin{theorem}
\label{DreamCong}
For a prime $p$ and positive integers $k$ and $\ell$, 
$$
(q^{\ell} ; q^{\ell})_\infty^{p^k} \equiv (q^{\ell p} ; q^{\ell p})_\infty^{p^{k-1}} \pmod{p^k}.
$$
\end{theorem}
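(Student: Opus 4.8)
The plan is to prove the statement by induction on $k$, with the engine being the elementary congruence $(1-x)^p \equiv 1 - x^p \pmod{p}$ together with a standard exponent-lifting lemma for formal power series. Throughout I interpret a congruence between power series modulo $p^m$ coefficient-by-coefficient; since $(q^{\ell};q^{\ell})_\infty$ and all of its powers are products of factors $(1-q^{\ell n})$, every series in sight has integer coefficients, so these congruences are well defined and each coefficient depends on only finitely many factors.

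First I would record the base case $k=1$. Writing $(q^{\ell};q^{\ell})_\infty^{p} = \prod_{n\ge 1}(1-q^{\ell n})^{p}$ and applying $(1-q^{\ell n})^{p} \equiv 1 - q^{\ell n p} \pmod{p}$ to each factor (which holds because $\binom{p}{i}\equiv 0 \pmod{p}$ for $0<i<p$), the product collapses modulo $p$ to $\prod_{n\ge 1}(1-q^{\ell p n}) = (q^{\ell p};q^{\ell p})_\infty$, giving
$$(q^{\ell};q^{\ell})_\infty^{p} \equiv (q^{\ell p};q^{\ell p})_\infty \pmod{p},$$
which is exactly the claim when $k=1$, the right-hand exponent being $p^{0}=1$.

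The inductive step rests on the following lemma: if $A$ and $B$ are integer power series with $A \equiv B \pmod{p^{j}}$ for some $j \ge 1$, then $A^{p} \equiv B^{p} \pmod{p^{j+1}}$. To prove it, write $A = B + p^{j}C$ with $C$ an integer series and expand
$$A^{p} = \sum_{i=0}^{p}\binom{p}{i}B^{p-i}p^{ji}C^{i} = B^{p} + \sum_{i=1}^{p}\binom{p}{i}p^{ji}B^{p-i}C^{i}.$$
I would then check the $p$-adic valuation of each term with $i\ge 1$. For $1 \le i \le p-1$ the factor $\binom{p}{i}$ contributes one power of $p$ and $p^{ji}$ contributes at least $p^{j}$, so the term is divisible by $p^{j+1}$; for $i=p$ the term $p^{jp}C^{p}$ is divisible by $p^{j+1}$ precisely because $j(p-1)\ge 1$ when $j\ge 1$. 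Hence every term past $B^{p}$ is divisible by $p^{j+1}$, which proves the lemma. This is where the hypothesis $j \ge 1$ is essential, and it is the only genuinely delicate point in the argument.

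Finally I would assemble the induction. Assuming the result for $k-1$ with $k \ge 2$, namely
$$(q^{\ell};q^{\ell})_\infty^{p^{k-1}} \equiv (q^{\ell p};q^{\ell p})_\infty^{p^{k-2}} \pmod{p^{k-1}},$$
I apply the lemma with $A = (q^{\ell};q^{\ell})_\infty^{p^{k-1}}$, $B = (q^{\ell p};q^{\ell p})_\infty^{p^{k-2}}$, and $j = k-1 \ge 1$, raising both sides to the $p$th power and lifting the modulus to obtain
$$(q^{\ell};q^{\ell})_\infty^{p^{k}} \equiv (q^{\ell p};q^{\ell p})_\infty^{p^{k-1}} \pmod{p^{k}},$$
which is the desired statement and closes the induction. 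The main obstacle is the exponent-lifting lemma, specifically verifying that all cross-terms in the binomial expansion carry $p$-adic valuation at least $j+1$; everything else is a direct application of $(1-x)^{p}\equiv 1-x^{p}\pmod{p}$ and routine bookkeeping.
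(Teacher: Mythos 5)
Your proof is correct. Note, however, that the paper does not prove this statement at all: it simply cites Lemma 3 of da Silva and Sellers \cite{dSS}, so any self-contained argument is ``different'' from the paper's one-line proof by reference. What you have written is essentially the standard proof of that cited lemma: the base case is the Frobenius/freshman's-dream congruence $(1-x)^p \equiv 1-x^p \pmod{p}$ applied factor-by-factor (legitimate here, as you observe, because each coefficient of the infinite product involves only finitely many factors), and the inductive step is the exponent-lifting lemma that $A \equiv B \pmod{p^j}$ with $j \geq 1$ implies $A^p \equiv B^p \pmod{p^{j+1}}$, whose verification via the binomial expansion you carry out correctly, including the slightly delicate $i=p$ term where $j(p-1) \geq 1$ is what saves you. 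Your version buys self-containedness and makes visible exactly where primality and the hypothesis $j \geq 1$ are used; the paper's version buys brevity by outsourcing the argument to the literature. The one point worth tightening is purely expository: in the lemma you should say explicitly that $B$ and $C$ have integer coefficients so that all binomial cross-terms are genuine integer series and the valuation bookkeeping is meaningful, but this is immediate in your setting and does not affect correctness.
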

\begin{proof} 
See \cite[Lemma 3]{dSS}.  
\end{proof}

\begin{theorem}[Euler's Pentagonal Number Theorem] 
\label{EulerPNT}
We have 
$$(q;q)_\infty = \sum_{n \in \mathbb{Z}} (-1)^n q^{n(3n-1)/2}.$$
\end{theorem}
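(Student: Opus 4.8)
The plan is to prove the identity combinatorially via Franklin's sign-reversing involution. First I would expand the product $(q;q)_\infty = \prod_{k \geq 1}(1-q^k)$ and observe that the coefficient of $q^n$ equals $\sum_\lambda (-1)^{\ell(\lambda)}$, where $\lambda$ ranges over partitions of $n$ into \emph{distinct} parts and $\ell(\lambda)$ denotes the number of parts. Thus $(q;q)_\infty = \sum_{n \geq 0}\bigl(D_e(n) - D_o(n)\bigr)q^n$, where $D_e(n)$ and $D_o(n)$ count partitions of $n$ into an even, respectively odd, number of distinct parts. The theorem is then equivalent to the statement that $D_e(n) - D_o(n)$ vanishes unless $n = k(3k-1)/2$ for some $k \in \mathbb{Z}$, in which case it equals $(-1)^k$.

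The heart of the argument is to construct an almost-everywhere-defined involution on partitions into distinct parts that reverses the parity of $\ell(\lambda)$, and hence cancels matched terms in the signed sum. For a partition $\lambda = (\lambda_1 > \lambda_2 > \dots > \lambda_\ell)$ I would introduce two statistics: the smallest part $s = \lambda_\ell$, and the length $\sigma$ of the maximal run of consecutive integers appearing among the largest parts, that is, the largest $j$ with $\lambda_i = \lambda_1 - (i-1)$ for $1 \leq i \leq j$. The map then proceeds by comparing $s$ and $\sigma$: if $s \leq \sigma$ one deletes the smallest part and adds $1$ to each of the top $s$ parts; if $s > \sigma$ one subtracts $1$ from each of the top $\sigma$ parts and appends a new smallest part equal to $\sigma$. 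Each operation preserves distinctness and the total size while changing $\ell(\lambda)$ by exactly one, so it is sign-reversing, and the two operations are mutually inverse wherever both are defined.

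The main obstacle, and the step requiring the most care, is the analysis of the boundary configurations in which the two moves collide and the map breaks down; these undefined cases are precisely the fixed points that survive in the signed sum. I would show that the construction fails exactly when the staircase exhausts the whole partition ($\sigma = \ell$) and simultaneously $s \in \{\sigma, \sigma+1\}$, forcing $\lambda$ to be one of the staircases $(2k-1, 2k-2, \dots, k)$ or $(2k, 2k-1, \dots, k+1)$. A direct computation gives their sizes as $k(3k-1)/2$ and $k(3k+1)/2$ respectively, each with exactly $k$ parts and hence sign $(-1)^k$. Reindexing the first family by positive integers $k$ and the second by negative integers via $k \mapsto -k$ merges both into the single sum $\sum_{n \in \mathbb{Z}}(-1)^n q^{n(3n-1)/2}$ (with $n=0$ accounting for the empty partition), which completes the proof. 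Throughout, care must be taken to verify that in every non-exceptional configuration at least one of the two moves is legal, and that the conditions $s \leq \sigma$ and $s > \sigma$ partition the remaining cases without overlap, so that the involution is genuinely well defined and fixed-point-free away from the pentagonal partitions.
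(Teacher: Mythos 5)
Your proposal is correct, but it takes a genuinely different route from the paper: the paper does not prove Theorem \ref{EulerPNT} at all, instead treating it as classical background and citing Hirschhorn's book, where the identity is obtained by $q$-series methods (essentially as a specialization of Jacobi's triple product). What you give is Franklin's classical sign-reversing involution, and your case analysis is the right one. With $s$ the smallest part and $\sigma$ the maximal staircase length at the top, the first move fails only when it would need to increment a part that has just been deleted, which forces $s = \sigma = \ell$, and the second fails only when the appended part $\sigma$ would duplicate the decremented smallest part, which forces $\ell = \sigma$ and $s = \sigma + 1$; so the exceptional set is exactly your $\sigma = \ell$, $s \in \{\sigma, \sigma+1\}$, giving the staircases $(2k-1,\dots,k)$ and $(2k,\dots,k+1)$ of sizes $k(3k-1)/2$ and $k(3k+1)/2$, each with $k$ parts and sign $(-1)^k$, and the reindexing $k \mapsto -k$ is consistent since $(-k)\bigl(3(-k)-1\bigr)/2 = k(3k+1)/2$. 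The one place you correctly flag as needing care is genuinely the fiddly part: checking that each move lands in the domain of the other (e.g., after the first move the new top run has length exactly $s$ and the new smallest part exceeds it, including the boundary subcase where only $s+1$ parts were present), but these verifications all go through by the maximality of the run and distinctness of parts. What your approach buys is a self-contained, elementary, and bijective proof that explains combinatorially why the pentagonal numbers appear; what the paper's citation buys is brevity, which is appropriate here since the theorem serves only as a standard tool in the congruence arguments of Section 2.
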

\begin{proof} 
See \cite[Equation (1.6.1)]{Hir}.  
\end{proof}

\begin{theorem}[Jacobi]
\label{JacobiCube}
We have 
$$(q;q)_\infty^3 = \sum_{n \geq 0} (-1)^n (2n+1) q^{n(n+1)/2}.$$
\end{theorem}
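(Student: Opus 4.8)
The plan is to obtain this as a confluent (derivative) specialization of the Jacobi triple product, which I would use in the form
$$\prod_{n=1}^{\infty}(1-q^n)(1-zq^{n-1})(1-z^{-1}q^{n}) = \sum_{n=-\infty}^{\infty}(-1)^n z^n q^{n(n-1)/2},$$
valid for $|q|<1$ and $z\neq0$; this form of the Jacobi triple product is the same master identity that underlies Theorem~\ref{EulerPNT}. The key observation is that the $n=1$ factor of the middle product equals $1-z$, which vanishes at $z=1$, while on the right the terms cancel in pairs (via $n\leftrightarrow 1-n$) at $z=1$, so both sides carry a simple zero there. Dividing through by $1-z$ and letting $z\to1$ will therefore yield a finite, nontrivial identity whose product side collapses to $(q;q)_\infty^3$.

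Concretely, I would rewrite the left-hand side as $(q;q)_\infty\,(1-z)\prod_{m\geq1}(1-zq^m)\prod_{m\geq1}(1-z^{-1}q^m)$, divide by $1-z$, and send $z\to1$: the two surviving products each tend to $(q;q)_\infty$, producing $(q;q)_\infty^3$ exactly. On the series side, since the sum vanishes at $z=1$, its quotient by $1-z$ tends to the negative of the derivative of the theta series at $z=1$, namely
$$-\sum_{n\in\mathbb{Z}}(-1)^n n\, q^{n(n-1)/2}.$$

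It remains to convert this two-sided derivative sum into the one-sided shape in the statement. I would pair the index $n$ with $1-n$, using $\binom{n}{2}=\binom{1-n}{2}$, so the paired contributions combine to $(-1)^n(1-2n)q^{n(n-1)/2}$; reindexing by $m=n-1\geq0$ turns $1-2n$ into $-(2m+1)$ and the sign into $(-1)^{m+1}$, and the two minus signs cancel to leave precisely $\sum_{m\geq0}(-1)^m(2m+1)q^{m(m+1)/2}$. I expect the main obstacle to be nothing in the limit computation itself but rather the supply of the triple product, which is the only substantive input; in keeping with the neighboring classical identities I would cite it from \cite{Hir} rather than reprove it. The genuinely delicate elementary points are the justification of differentiating termwise under the infinite product and sum (routine for $|q|<1$) and the sign-and-index bookkeeping in the final reindexing, where a factor of $-1$ is easy to misplace.
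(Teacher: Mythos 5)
Your proof is correct, but it is not what the paper does: the paper disposes of Theorem \ref{JacobiCube} with a bare citation to \cite[Equation (1.7.1)]{Hir}, exactly as it handles Theorem \ref{EulerPNT}, whereas you actually derive the identity. Your derivation is the classical confluent specialization of the triple product, and the delicate points all check out: the form you quote is the standard $(q;q)_\infty(z;q)_\infty(q/z;q)_\infty=\sum_{n\in\mathbb{Z}}(-1)^n z^n q^{\binom{n}{2}}$; the pairing $n\leftrightarrow 1-n$ does annihilate the series at $z=1$ because $\binom{n}{2}=\binom{1-n}{2}$; the quotient by $1-z$ tends to $-f'(1)$ as you say; and the sign bookkeeping is right, since summing $(-1)^n(1-2n)q^{n(n-1)/2}$ over $n\geq 1$ and setting $m=n-1$ gives exactly $\sum_{m\geq 0}(-1)^m(2m+1)q^{m(m+1)/2}$. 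As for what each route buys: the paper's citation keeps an auxiliary lemma short, since the identity is classical and is used purely as a black box (in the remark following Theorem \ref{4(t)}); your argument is self-contained modulo the triple product, exposes that Theorems \ref{EulerPNT} and \ref{JacobiCube} are two specializations of one master identity, and in substance reproduces the textbook proof sitting behind the paper's citation. If you were to write yours into the paper, the only items needing a sentence of rigor are the locally uniform convergence in $z$ justifying termwise differentiation and the interchange of limit and infinite product, both routine for $|q|<1$ as you note.
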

\begin{proof} 
See \cite[Equation (1.7.1)]{Hir}.  
\end{proof}

\begin{theorem}
\label{XiaYao_2diss}
We have 
$$\frac{(q^9;q^9)_\infty}{(q;q)_\infty} = \frac{(q^{12};q^{12})_\infty^3(q^{18};q^{18})_\infty}{(q^{2};q^{2})_\infty^2(q^{6};q^{6})_\infty(q^{36};q^{36})_\infty} + q\frac{(q^{4};q^{4})_\infty^2(q^{6};q^{6})_\infty(q^{36};q^{36})_\infty}{(q^{2};q^{2})_\infty^3(q^{12};q^{12})_\infty}.$$
\end{theorem}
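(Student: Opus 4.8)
The plan is to read the claimed identity as the $2$-dissection of $a(q) := (q^9;q^9)_\infty/(q;q)_\infty$, so the first task is the parity bookkeeping. Every $q$-Pochhammer argument in the first summand, namely $2,6,12,18,36$, is even, so that term is a power series in $q^2$ and is therefore forced to be the \emph{even} part of $a(q)$; the second summand carries an explicit factor of $q$ in front of a series whose arguments $2,4,6,12,36$ are again all even, so it is the \emph{odd} part. Thus it suffices to compute the even and odd parts of $a(q)$ and match each to the displayed eta-quotient.

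To do this I would first strip off the manifestly even portion of $a(q)$. Splitting each infinite product into its odd- and even-indexed factors gives $(q^9;q^9)_\infty = (q^9;q^{18})_\infty (q^{18};q^{18})_\infty$ and $(q;q)_\infty = (q;q^2)_\infty (q^2;q^2)_\infty$, whence
$$ a(q) = \frac{(q^{18};q^{18})_\infty}{(q^2;q^2)_\infty}\cdot \frac{(q^9;q^{18})_\infty}{(q;q^2)_\infty}. $$
The leading factor is a series in $q^2$, so the whole problem reduces to $2$-dissecting the single ratio $H(q):=(q^9;q^{18})_\infty/(q;q^2)_\infty$, whose numerator and denominator are supported entirely on odd residues. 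Writing $H(q) = H_0(q^2) + q\,H_1(q^2)$ and comparing with the target, the theorem is equivalent to the pair of eta-quotient identities
$$ H_0(q) = \frac{(q^6;q^6)_\infty^3}{(q;q)_\infty (q^3;q^3)_\infty (q^{18};q^{18})_\infty}, \qquad H_1(q) = \frac{(q^2;q^2)_\infty^2 (q^3;q^3)_\infty (q^{18};q^{18})_\infty}{(q;q)_\infty^2 (q^6;q^6)_\infty (q^9;q^9)_\infty}. $$
(I would confirm these against the first several coefficients as a sanity check before investing in a proof; a short computation does bear them out.) To establish the dissection of $H$ I would pass to Ramanujan's theta functions, using $1/(q;q^2)_\infty = (-q;q)_\infty$ to write $H$ as a quotient of theta products, and then apply the standard $2$-dissections of $\varphi$, $\psi$, and $(q;q)_\infty$ recorded in Hirschhorn's book, fed by Euler's pentagonal number theorem (Theorem~\ref{EulerPNT}) split according to the parity of the exponent $n(3n-1)/2$ and by Jacobi's identity (Theorem~\ref{JacobiCube}). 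Collecting even and odd parts and simplifying the resulting product of theta quotients should deliver $H_0$ and $H_1$.

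I expect the main obstacle to be precisely this eta-quotient bookkeeping: locating the base dissection that feeds cleanly into $H$ and then wrestling a product of several theta quotients into the two compact forms above, where massive cancellation must be organized by hand. As a robust alternative that sidesteps the search for the right building block, I would observe that all three expressions are weight-$0$ eta-quotients whose level (the least common multiple of the arguments) is $36$. The identity then becomes a relation among modular functions on $\Gamma_0(36)$: since eta-quotients are holomorphic and nonvanishing on $\mathbb{H}$, I would use Ligozat's criterion to bound the order of the relevant combination at every cusp of $\Gamma_0(36)$, conclude that it is holomorphic on the compactified curve and hence constant, and pin the constant to $0$ by comparing $q$-expansion coefficients up to the Sturm bound. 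The cost of this route is carrying the cusp-order and Sturm-bound computation out at level $36$, together with the fractional-exponent and multiplier-system bookkeeping forced by the explicit factor $q$ in the second summand — routine in principle but laborious in practice.
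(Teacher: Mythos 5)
Your proposal is correct in its reductions but takes a genuinely different route from the paper, because the paper does not prove this identity at all: its ``proof'' is a citation to Lemma 3.5 of Xia and Yao, where the dissection is established by their even--odd method. Your preliminary steps are sound and verifiable: the splittings $(q^9;q^9)_\infty=(q^9;q^{18})_\infty(q^{18};q^{18})_\infty$ and $(q;q)_\infty=(q;q^2)_\infty(q^2;q^2)_\infty$ are standard, the extracted factor $(q^{18};q^{18})_\infty/(q^2;q^2)_\infty$ is indeed a series in $q^2$, and your two component identities for $H_0$ and $H_1$ are exactly equivalent to the theorem (the exponent bookkeeping checks out, and the first several coefficients of both sides agree). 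The weak point is your primary route: ``apply the standard $2$-dissections of $\varphi$, $\psi$, and $(q;q)_\infty$'' is not yet an argument, since $H(q)=(q^9;q^{18})_\infty(-q;q)_\infty$ is not visibly assembled from those building blocks, and locating the correct theta identities is precisely the content of the Xia--Yao lemma; as written, that branch has the gap you yourself flag. Your fallback route, by contrast, is complete in principle: all three terms are weight-zero eta quotients of level $36$ (the fractional powers of $q$ come out to $1/3$, $1/3$, and $4/3$, the last absorbing the explicit factor $q$), so Ligozat's cusp-order criterion together with a Sturm-bound coefficient check on $\Gamma_0(36)$ yields a rigorous, if laborious, proof. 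In short, the paper buys brevity by outsourcing to the literature, while your approach buys self-containedness --- but only the modular-function branch of it would stand as a proof without substantial further theta-function work.
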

\begin{proof}
This 2--dissection result appears in the work of Xia and Yao \cite[Lemma 3.5]{XiaYao}.
\end{proof}

\begin{corollary}
\label{XYCor}
We have 
$$\frac{(q^9;q^9)_\infty}{(q;q)_\infty} \equiv_3  (q^{2};q^{2})_\infty^4 + q\frac{(q^{36};q^{36})_\infty}{(q^{4};q^{4})_\infty}$$
where the notation $X \equiv_m Y$ is shorthand for $X \equiv Y \pmod{m}$. 
\end{corollary}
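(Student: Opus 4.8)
The plan is to start from the $2$-dissection furnished by Theorem \ref{XiaYao_2diss} and reduce each of its two terms modulo $3$ by repeated application of Theorem \ref{DreamCong} in the case $p=3$, $k=1$, which reads $(q^{\ell};q^{\ell})_\infty^3 \equiv_3 (q^{3\ell};q^{3\ell})_\infty$ for every positive integer $\ell$. The one structural point to observe at the outset is that every $q$-Pochhammer factor appearing is a unit in $\mathbb{Z}[[q]]$, having constant term $1$; consequently the congruences are preserved not only under multiplication but also under inversion, since from $A \equiv_3 B$ and the identity $A^{-1}-B^{-1}=A^{-1}(B-A)B^{-1}$ one gets $A^{-1}\equiv_3 B^{-1}$. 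This legitimizes rewriting Pochhammer factors inside numerators and denominators freely.

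For the first term I would rewrite $(q^{12};q^{12})_\infty^3$ as $(q^{36};q^{36})_\infty$ and cancel the factor $(q^{36};q^{36})_\infty$ already present in the denominator; then rewrite $(q^{18};q^{18})_\infty$ as $(q^{6};q^{6})_\infty^3$ and cancel one factor of $(q^{6};q^{6})_\infty$; and finally rewrite $(q^{6};q^{6})_\infty^2$ as $(q^{2};q^{2})_\infty^6$. The chain of congruences then collapses the first term to a pure power of $(q^2;q^2)_\infty$:
\begin{align*}
\frac{(q^{12};q^{12})_\infty^3(q^{18};q^{18})_\infty}{(q^{2};q^{2})_\infty^2(q^{6};q^{6})_\infty(q^{36};q^{36})_\infty}
&\equiv_3 \frac{(q^{18};q^{18})_\infty}{(q^{2};q^{2})_\infty^2(q^{6};q^{6})_\infty} \\
&\equiv_3 \frac{(q^{6};q^{6})_\infty^2}{(q^{2};q^{2})_\infty^2}
\equiv_3 (q^{2};q^{2})_\infty^4 .
\end{align*}

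For the second term I would rewrite the denominator factor $(q^{2};q^{2})_\infty^3$ as $(q^{6};q^{6})_\infty$ and cancel it against the numerator factor $(q^{6};q^{6})_\infty$, then rewrite $(q^{12};q^{12})_\infty$ as $(q^{4};q^{4})_\infty^3$ and cancel against $(q^{4};q^{4})_\infty^2$, which leaves exactly $q\,(q^{36};q^{36})_\infty/(q^{4};q^{4})_\infty$. Adding the two reduced terms yields the claimed congruence. I do not anticipate any real obstacle: the only matters demanding attention are the bookkeeping of which Pochhammer factor to rewrite at each stage so that the cancellations align, and the (elementary) justification, noted above, that congruences modulo $3$ survive inversion of these unit power series.
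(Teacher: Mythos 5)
Your proof is correct and takes essentially the same route as the paper, whose entire proof is the statement that the corollary ``follows immediately from elementary manipulations of the identity in Theorem \ref{XiaYao_2diss} using Theorem \ref{DreamCong}''---your reductions with $p=3$, $k=1$ are precisely those manipulations. The only addition is your explicit justification that congruences modulo $3$ survive inversion of unit power series, which the paper leaves implicit and which you handle correctly.
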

\begin{proof}
This result follows immediately from elementary manipulations of the identity in Theorem \ref{XiaYao_2diss} using Theorem \ref{DreamCong}.
\end{proof}

To close out this section, we state the second part of Fayers' conjecture, which remains open.
\begin{conjecture}[Fayers, \cite{Fayers}]\label{fayersconj2}
Consider the same environment as Conjecture \ref{fayersconj}, and suppose $t \geq 2$. Then $f_t(s)$ is divisible by $s+t+(-1)^t$.
\end{conjecture}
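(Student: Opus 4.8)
The plan is to work directly with the explicit formula for $f_t(s)$ furnished by Theorem \ref{fayersthm} and to prove the divisibility by exhibiting the root. Since $f_t(s)$ is a polynomial in $s$, it is divisible by the linear polynomial $s+t+(-1)^t$ precisely when $s_0 := -\bigl(t+(-1)^t\bigr)$ is a root, i.e. when $f_t(s_0)=0$; note that for $t\geq 2$ we have $t+(-1)^t\geq 1$, so $s_0\neq 0$ and the quotient $f_t(s)/\bigl(s+t+(-1)^t\bigr)$ is a genuine polynomial, of degree $t-2$, matching the degree count. Thus the entire problem reduces to the single evaluation $f_t(s_0)=0$.

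To carry out that evaluation I would pass to the exponential generating function of the $f_t$ in the variable $t$. Using the known EGF of the Lah numbers, $\sum_{t\geq m}L(t,m)\,x^t/t! = \tfrac1{m!}\bigl(x/(1-x)\bigr)^m$, interchanging the order of summation in \eqref{FayersPolys}, and then applying the binomial series $\sum_{k\geq 0}(s+1)^{\overline{k}}\, y^k/k! = (1-y)^{-(s+1)}$ together with one term-by-term integration (to absorb the index shift $m\mapsto m-1$), I expect to obtain the closed form
\begin{equation*}
\sum_{t\geq 1} f_t(s)\,\frac{x^t}{t!} \;=\; \frac1s\left[\left(\frac{1-x}{1-2x}\right)^{s}-1\right],
\end{equation*}
where one substitutes $y=x/(1-x)$, so that $1-y=(1-2x)/(1-x)$. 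Reading off the coefficient of $x^t$ confirms $f_t(s)$ is a monic degree-$(t-1)$ polynomial in $s$ (as the conjecture asserts), and evaluating at $s=s_0=-N$ with $N:=t+(-1)^t$ collapses the $-1$ term and gives $f_t(s_0)=\tfrac{t!}{s_0}\,[x^t]\bigl((1-2x)/(1-x)\bigr)^{N}$. Hence it remains only to prove the coefficient identity $[x^t]\bigl((1-2x)/(1-x)\bigr)^{N}=0$.

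For the final step I would expand $\bigl((1-2x)/(1-x)\bigr)^{N}=\bigl(1-\tfrac{x}{1-x}\bigr)^{N}$ by the binomial theorem and extract the coefficient of $x^t$, recognizing the resulting sum $\sum_j(-1)^j\binom{N}{j}\binom{t-1}{j-1}$ as $[x^t](1-x)^{N}(1+x)^{t-1}$. The claim then follows by a parity split on $t$: when $t$ is odd, $N=t-1$ and $(1-x)^{N}(1+x)^{t-1}=(1-x^2)^{t-1}$ is an even function, so its $x^t$ coefficient vanishes; when $t$ is even, $N=t+1$ and $(1-x)^{N}(1+x)^{t-1}=(1-x)^2(1-x^2)^{t-1}$, whose $x^t$ coefficient reduces (the odd middle term dropping out for parity reasons) to $(-1)^{t/2}\bigl(\binom{t-1}{t/2}-\binom{t-1}{t/2-1}\bigr)$, which is $0$ by the symmetry $\binom{2r-1}{r}=\binom{2r-1}{r-1}$.

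I expect the main obstacle to be obtaining the closed-form generating function: the sum in \eqref{FayersPolys} mixes Lah numbers with rising factorials of a shifted index, and the simplification hinges on noticing that the EGF in $t$ linearizes the Lah weights while the binomial series resums the rising factorials, with the index shift handled by a single integration. Once this identity is in hand, everything else is mechanical, and it is precisely in the concluding parity analysis that the exact shift $+(-1)^t$ — rather than some other constant — emerges, explaining the curious sign in Fayers' conjecture.
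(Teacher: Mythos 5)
There is no proof in the paper to compare against: the paper explicitly leaves this statement open, both where it is stated (``we state the second part of Fayers' conjecture, which remains open'') and again in the closing remarks (``Conjecture \ref{fayersconj2} remains open''). The only ingredient the paper supplies is the explicit formula of Theorem \ref{fayersthm}, which is exactly what you build on, so your argument is not a variant of the paper's proof --- it would, if written up carefully, settle a conjecture the paper does not resolve. As far as I can verify, every step is correct. The reduction to the single evaluation $f_t(s_0)=0$, $s_0=-(t+(-1)^t)$, is valid because the divisor is monic linear (and $s_0\neq 0$ for $t\geq 2$, which is precisely why $t=1$ is excluded). Your claimed generating function is right: interchanging summation in \eqref{FayersPolys} and using the Lah EGF gives $\sum_{m\geq 1}(s+1)^{\overline{m-1}}\,y^m/m!$ with $y=x/(1-x)$; this series vanishes at $y=0$ and has $y$-derivative $(1-y)^{-(s+1)}$, hence equals $\frac{1}{s}\left[(1-y)^{-s}-1\right]=\frac{1}{s}\left[\left(\frac{1-x}{1-2x}\right)^{s}-1\right]$. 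Since each coefficient of $x^t$ in $\left(\frac{1-x}{1-2x}\right)^{s}=\exp\left(s\log\frac{1-x}{1-2x}\right)$ is a polynomial in $s$, the specialization $s=-N$ with $N=t+(-1)^t$ commutes with coefficient extraction, giving $f_t(s_0)=\frac{t!}{s_0}[x^t]\left(\frac{1-2x}{1-x}\right)^{N}$. The conversion of that coefficient to $[x^t](1-x)^{N}(1+x)^{t-1}$ is correct, since $[x^t]\,x^j(1-x)^{-j}=\binom{t-1}{j-1}$ and $\binom{t-1}{t-j}=\binom{t-1}{j-1}$, and the parity split finishes both cases: for odd $t$ the function $(1-x^2)^{t-1}$ is even so its $x^t$ coefficient vanishes, and for even $t$ the coefficient collapses to $(-1)^{t/2}\left(\binom{t-1}{t/2}-\binom{t-1}{t/2-1}\right)=0$. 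I also checked $f_t(s_0)=0$ directly from \eqref{FayersPolys} for $t\leq 6$.

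Two points to make explicit in a final write-up. First, the manipulations (interchange, formal integration, substitution $y=x/(1-x)$, division by $s$) should be framed as identities in $\mathbb{Q}[s][[x]]$ read coefficient-wise, so that evaluating the polynomial identity $s\,f_t(s)/t!=[x^t]\left(\left(\frac{1-x}{1-2x}\right)^{s}-1\right)$ at the negative integer $s=-N$ is manifestly legitimate; this is the only place where a careless reading could be challenged. Second, since the paper records this conjecture as open, you should present the argument as a standalone result rather than as a remark; the closed form $\sum_{t\geq 1}f_t(s)x^t/t!=\frac{1}{s}\left[\left(\frac{1-x}{1-2x}\right)^{s}-1\right]$ is of independent interest (for instance, it gives an alternative route to the recurrence for $f_t$ that the paper obtains by computer algebra).
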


\section{Generating functions and congruences}
\renewcommand\thetheorem{\thesection.\arabic{theorem}}
We begin this section by proving Lemma \ref{basiclemma} and Theorem \ref{stgenfun}.

\begin{proof}[Proof of Lemma \ref{basiclemma}] In the $p \rightarrow \infty$ limit, the Cho-Huh-Sohn formula from Theorem \ref{CHSthm} enumerates the $s \pmod{t}$-cores. One may also note that the set of $(s,s+t)$-cores, which is finite for coprime $s$ and $s+t$, contains the $s \pmod{t}$-cores, and $\gcd(s,s+t)=\gcd(s,t)=1$.  For $d>1$, the infinite set of $d$-cores is a subset of the $s \pmod{t}$-cores.
\end{proof}

\begin{proof}[Proof of Theorem \ref{stgenfun}]
This is a trivial statement when $d = 1$. For $d > 1$, the Littlewood decomposition into the $d$-core and $d$-quotient, as discussed by Olsson \cite[Proposition 3.6]{Olsson93}, ensures that the $d$-quotient consists of partitions that are exactly $s/d (t/d)$-cores. The form of the generating function provided follows.
%, and for $d>1$, the logic is the same as the similar form of generating function given in Theorem \ref{thm:AKS} as Theorem 4.1 in \cite{AKS} by Aukerman, Kane, and Sze and more specifically for simultaneous $(s,t)$-cores as Corollary 1.8 in Gramain, Nath, and Sellers \cite{GNS}.
\end{proof}
Note that the same logic applied to simultaneous $(s,t)$-core partitions produces Corollary 3 in \cite{AKS} and Corollary 1.8 in \cite{GNS}, which is why the corresponding generating functions bear a strong resemblance to one another.

Thanks to Theorem \ref{stgenfun}, we can write down many generating functions once we know the coprime cases. 

For $s=2$, $t$ odd, we observe that a $2 \pmod{t}$-core is in particular a 2-core, which are known to be the ``staircase'' partitions $ \varnothing, (1), (2,1), (3,2,1), \dots $ of size $\binom{n+1}{2}$, $n \geq 0$.  The largest hooklength in the partition $(m,m-1,\dots,1)$ is $2m-1$, and all odd hooklengths below this appear.  Hence we have the following.

\begin{theorem} For any $m\geq 0$,
$$C_{2(2m+1)}(q) = \sum_{k=0}^{m+1} q^{\binom{k+1}{2}}.$$
\end{theorem}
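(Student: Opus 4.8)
The plan is to determine exactly which staircase partitions $(m, m-1, \dots, 1)$ are $2 \pmod{2m+1}$-cores, and then sum their sizes as powers of $q$. Since a $2 \pmod{2m+1}$-core must in particular be a $2$-core, and the $2$-cores are precisely the staircases $\delta_k = (k, k-1, \dots, 1)$ of size $\binom{k+1}{2}$, the entire problem reduces to deciding for which $k$ the staircase $\delta_k$ additionally avoids all hooklengths congruent to $2$ modulo $2m+1$.

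First I would record the hooklength content of $\delta_k$. The staircase $\delta_k$ has largest hooklength $2k-1$ (the corner box), and as noted in the text its hooklengths are exactly the odd numbers $1, 3, 5, \dots, 2k-1$, each occurring (the principal hooks being $2k-1, 2k-3, \dots$). The key structural fact I would establish or cite is that $\delta_k$ contains a hook of every odd length up to $2k-1$ and no even hooks at all. Consequently $\delta_k$ is a $2 \pmod{2m+1}$-core if and only if none of its (odd) hooklengths lies in the residue class $2 \pmod{2m+1}$.

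Next I would translate this avoidance condition into a bound on $k$. The smallest positive integer congruent to $2 \pmod{2m+1}$ that is also odd is $2m+3$ (since $2$ itself is even, the next representative $2m+3$ is odd). Thus $\delta_k$ fails to be a $2 \pmod{2m+1}$-core exactly when its largest hooklength $2k-1$ reaches $2m+3$, i.e.\ when $2k - 1 \geq 2m+3$, equivalently $k \geq m+2$. Therefore $\delta_k$ is a $2 \pmod{2m+1}$-core precisely for $k = 0, 1, \dots, m+1$, giving the $m+2$ partitions $\varnothing, (1), (2,1), \dots, (m+1, m, \dots, 1)$. Summing $q^{|\delta_k|} = q^{\binom{k+1}{2}}$ over $k = 0$ to $m+1$ yields exactly
$$C_{2(2m+1)}(q) = \sum_{k=0}^{m+1} q^{\binom{k+1}{2}},$$
as claimed.

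The main obstacle I anticipate is the clean verification of the hooklength set of a staircase partition — specifically, arguing rigorously that the hooklengths of $\delta_k$ are exactly the odd integers from $1$ to $2k-1$ (with no even values appearing and no odd value skipped). This is a standard fact but deserves a careful justification, most easily via the first-column hooklengths (beta-set) of $\delta_k$, which are $1, 3, 5, \dots, 2k-1$, together with the observation that every hooklength of a partition is a difference or a first-column entry arising from its abacus/beta-set description. Once the hooklength content is pinned down, the residue bookkeeping modulo $2m+1$ and the resulting cutoff at $k = m+1$ are elementary, and the generating function falls out immediately.
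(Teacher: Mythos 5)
Your proposal is correct and matches the paper's own argument, which is given in the prose immediately preceding the theorem: a $2 \pmod{2m+1}$-core is in particular a $2$-core, hence a staircase $\delta_k$ whose hooklengths are exactly the odd numbers $1,3,\dots,2k-1$, and the cutoff $k \leq m+1$ comes from the smallest odd member $2m+3$ of the forbidden residue class. The only difference is that you spell out the bookkeeping (and the justification of the staircase hookset via beta-sets) more explicitly than the paper does.
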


For example, the generating function for partitions with no hook of length $2 \pmod{3}$ is $1+q+q^3$, since the partitions $\varnothing$, $(1)$, and $(2,1)$ have hooks of length 1 and 3, while $(3,2,1)$ has a hook of length 5.

Hence, thanks to Theorem \ref{stgenfun}, we have the following.  

\begin{theorem}\label{4m+2} For any $m\geq 0$,
$$C_{4(4m+2)}(q) =  \frac{(q^2;q^2)_\infty^2}{(q;q)_\infty} \left(\sum_{k=0}^{m+1} q^{k^2+k} \right)^2 = \left( \sum_{n=0}^\infty q^{\binom{n+1}{2}} \right) \left(\sum_{k=0}^{m+1} q^{2 \binom{k+1}{2}} \right)^2.$$
\end{theorem}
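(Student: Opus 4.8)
The plan is to apply Theorem \ref{stgenfun} directly with $s=4$ and $t=4m+2$, thereby reducing the computation to the coprime case handled by the preceding theorem. First I would record the value of $d$: since $4m+2 = 2(2m+1)$ with $2m+1$ odd, we have $d = \gcd(4,4m+2) = 2$, so that $s/d = 2$ and $t/d = 2m+1$. Feeding these into Theorem \ref{stgenfun} gives
\[
C_{4(4m+2)}(q) = \frac{(q^2;q^2)_\infty^2}{(q;q)_\infty}\, C_{2(2m+1)}(q^2)^2,
\]
which already exhibits the prefactor appearing in the first claimed expression.

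Next I would substitute the explicit polynomial for $C_{2(2m+1)}(q)$ obtained in the immediately preceding theorem, namely $C_{2(2m+1)}(q) = \sum_{k=0}^{m+1} q^{\binom{k+1}{2}}$. Replacing $q$ by $q^2$ throughout and using the elementary identity $2\binom{k+1}{2} = k^2+k$, one gets $C_{2(2m+1)}(q^2) = \sum_{k=0}^{m+1} q^{k^2+k}$. Squaring and combining with the prefactor above yields the first equality in the statement.

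For the second equality it remains only to identify the prefactor $\frac{(q^2;q^2)_\infty^2}{(q;q)_\infty}$ with the triangular-number series $\sum_{n=0}^\infty q^{\binom{n+1}{2}}$. This is precisely the $t=2$ instance of Olsson's generating function for $t$-cores, combined with the description—recalled just before the preceding theorem—of the $2$-cores as the staircase partitions $(n,n-1,\dots,1)$ of size $\binom{n+1}{2}$; equivalently it is the classical theta-series identity of Gauss. Substituting this evaluation into the prefactor, and rewriting the squared factor via $k^2+k = 2\binom{k+1}{2}$, completes the argument.

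I do not expect a genuine obstacle here, as the whole computation is bookkeeping once Theorem \ref{stgenfun} and the preceding theorem are in hand. The one point requiring care is the evaluation of $\frac{(q^2;q^2)_\infty^2}{(q;q)_\infty}$ in closed form: the cleanest route is to recognize it as the $2$-core generating function (via Olsson together with the staircase description) rather than manipulating the infinite products directly, so the only mild subtlety is invoking that identification at the right moment.
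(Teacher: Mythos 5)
Your proposal is correct and matches the paper's argument exactly: the paper deduces this theorem by applying Theorem \ref{stgenfun} with $d=\gcd(4,4m+2)=2$ to the preceding formula $C_{2(2m+1)}(q)=\sum_{k=0}^{m+1}q^{\binom{k+1}{2}}$, and the identification $\frac{(q^2;q^2)_\infty^2}{(q;q)_\infty}=\sum_{n=0}^\infty q^{\binom{n+1}{2}}$ (Olsson's $t=2$ case together with the staircase description of $2$-cores, i.e.\ Gauss's identity) gives the second form, just as you say. Your write-up simply makes explicit the bookkeeping the paper compresses into ``Hence, thanks to Theorem \ref{stgenfun}, we have the following.''
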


\begin{example} Let $m=0,1,2,3$. Then Theorem \ref{4m+2} produces the following generating functions.
\begin{align*}
C_{4(2)}(q) &= \left( \sum_{n=0}^\infty q^{\binom{n+1}{2}} \right)(1+q^2)^2, \\
 C_{4(6)}(q) &=\left( \sum_{n=0}^\infty q^{\binom{n+1}{2}} \right) \left( 1 + q^2 + q^6 \right)^2, \\
 C_{4(10)}(q) &=\left( \sum_{n=0}^\infty q^{\binom{n+1}{2}} \right) \left( 1 + q^2 + q^6 + q^{12} \right)^2, \\
  C_{4(14)}(q) &=\left( \sum_{n=0}^\infty q^{\binom{n+1}{2}} \right) \left( 1 + q^2 + q^6 + q^{12} + q^{20} \right)^2.
%C_{6(9)} &=  \frac{(q^3;q^3)_\infty^3}{(q;q)_\infty}  (1+q^3+q^9)^3. 
\end{align*}
\end{example}

The first line here is an exception of the type we mentioned earlier; we refer here to partitions which lack hooks in the set $\{4,6,8,10,12,\dots\}$, i.e., any even hook \emph{except} those of length 2.  This is also the set of simultaneous $(4,6)$-cores.  In general the set of $4\pmod{4m+2}$-cores is the set of simultaneous $(4,4m+6)$-cores: the avoided hookset $\{4, 4m+6, 4m+10, \dots \}$ happens to be the semigroup generated by 4 and $4m+6$, because $2(4m+6) = 8m+12$, which is already divisible by 4.

This is \emph{not} the case when $d>2$, as we shall see later in this section. 

Briefly, if one is interested in congruences, one may note that the functions $C_{4(4m+2)}$ entirely avoid numerous arithmetic progressions, sometimes modulo 2 and sometimes identically. We provide the following example.

\begin{theorem}For all $j\geq 0$, it holds that $c_{4(2)}(11j+9) = 0$ and $c_{4(2)}(11j+2) \equiv 0 \pmod{2}$.
\end{theorem}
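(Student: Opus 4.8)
The plan is to read the coefficients of $C_{4(2)}(q)$ straight off the product form supplied by Theorem \ref{4m+2} at $m=0$, namely
$$C_{4(2)}(q) = \left(\sum_{n=0}^\infty q^{\binom{n+1}{2}}\right)(1+q^2)^2.$$
First I would expand $(1+q^2)^2 = 1 + 2q^2 + q^4$. Writing $r(N)$ for the indicator that $N$ equals a triangular number $\binom{n+1}{2}$ for some $n \geq 0$, this gives the exact formula
$$c_{4(2)}(N) = r(N) + 2\,r(N-2) + r(N-4).$$
Because the triangular numbers are strictly increasing, each value $r(\cdot)$ lies in $\{0,1\}$, so the whole problem collapses to deciding, in a fixed arithmetic progression modulo $11$, which of the shifted arguments $N$, $N-2$, $N-4$ are permitted to be triangular.

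The one computational ingredient is the set of residues modulo $11$ attained by triangular numbers. From $\binom{n+12}{2} - \binom{n+1}{2} = 11(n+6)$ one sees that $\binom{n+1}{2} \bmod 11$ is periodic in $n$ with period $11$; evaluating $n = 0, 1, \dots, 10$ yields the attained set $\{0,1,3,4,6,10\}$, so that $\{2,5,7,8,9\}$ are never triangular modulo $11$. With this in hand both claims are immediate. For the first, take $N = 11j+9$, so that $N \equiv 9$, $N-2 \equiv 7$, and $N-4 \equiv 5 \pmod{11}$; all three residues lie in the non-triangular set, whence $r(N) = r(N-2) = r(N-4) = 0$ and $c_{4(2)}(11j+9) = 0$. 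For the second, take $N = 11j+2$, so that $N \equiv 2$ and $N-4 \equiv 9 \pmod{11}$ are both non-triangular, forcing $r(N) = r(N-4) = 0$; this leaves $c_{4(2)}(11j+2) = 2\,r(11j)$, which is even irrespective of whether $11j$ itself is triangular.

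There is no genuine obstacle here: once the product form is available the argument is a finite residue check. The only points demanding care are verifying that the coefficients of $\sum_{n\geq 0} q^{\binom{n+1}{2}}$ really are $\{0,1\}$-valued (which is just the strict monotonicity of the triangular numbers, guaranteeing no exponent is repeated) and correctly tracking the shifts by $2$ and $4$ so that the intended residue classes are the ones tested. If one preferred, the triangular-residue computation could be recast through the characterization that $N$ is triangular if and only if $8N+1$ is a perfect square, reducing the question to quadratic residues modulo $11$; but the direct enumeration of the eleven values is shorter and entirely self-contained.
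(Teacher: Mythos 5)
Your proposal is correct and follows essentially the same route as the paper: expand $(1+q^2)^2 = 1+2q^2+q^4$, compute the residues of triangular numbers modulo $11$ (namely $\{0,1,3,4,6,10\}$), observe that $9$, $7$, $5$ are all non-triangular residues so $c_{4(2)}(11j+9)=0$, and that residue $2$ can only arise via the $2q^2$ term, forcing evenness. Your explicit coefficient formula $c_{4(2)}(N) = r(N) + 2r(N-2) + r(N-4)$ is just a slightly more formal packaging of the paper's identical argument.
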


\begin{proof} For $N \equiv 9 \pmod{11}$ to appear as the exponent of $q$ in the expansion of $\left( \sum_{n=0}^\infty q^{\binom{n+1}{2}} \right)(1+2q^2+q^4)$, it must be of the form $T_n + k$ where $T_n$ is a triangular number and $k \in \{0,2,4\}$.  It is easy to check that the residues of triangular numbers modulo 11 are $\{0,1,3,4,6,10\}$.  Thus, modulo 11, the exponents of $q$ in the expansion of $\left( \sum_{n=0}^\infty q^{\binom{n+1}{2}} \right)(1+2q^2+q^4)$ must be in the set  $\{0,1,2,3,4,5,6,7,8,10\}$.  Hence $N \equiv 9 \pmod{11}$ is never represented.  Moreover, $N \equiv 2 \pmod{11}$ can appear as an exponent in this expansion only by adding 2 to a residue 0.  This occurs exactly by multiplication involving the term $2q^2$, and $2q^2 \equiv 0 \pmod{2}$.  This proves the second part of the theorem.
\end{proof}

In general, if $d=2$, then $C_{s(t)}(q) = \left( \sum_{n=0}^\infty q^{\binom{n+1}{2}} \right) f(q)$ for some polynomial $f(q)$, and for large enough moduli will appear as short isolated nonzero segments, eventually possessing the behavior above.

We can even make a statement which holds for infinitely many $t$.
%, where in our work below, the notation $An+B_1, B_2, \dots, B_r$ means the arithmetic progressions $An+B_1, An+B_2, \dots, An+B_r$.

\begin{theorem}\label{4(t)} For $t \equiv 2 \pmod{4}$, $p \equiv 3 \pmod{4}$ prime, $k \not\equiv 0 \pmod{p}$, and all $n\geq 0$, it holds that $$c_{4(t)}(p^2 n + k p - 2^{-1} - 2^{-3}) \equiv 0 \pmod{2},$$ where $2^{-1}$ is the inverse of $2$ modulo $p^2$.  
\end{theorem}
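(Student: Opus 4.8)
The plan is to reduce the generating function modulo $2$ to a single theta-type series and then recast the parity of the relevant coefficient as a question about representations as a sum of two squares. Since $t \equiv 2 \pmod 4$ we have $d = \gcd(4,t) = 2$ and $t = 4m+2$, so Theorem \ref{4m+2} gives
$$C_{4(t)}(q) = \left(\sum_{i \geq 0} q^{\binom{i+1}{2}}\right)\left(\sum_{j=0}^{m+1} q^{j^2+j}\right)^2.$$
Reducing modulo $2$ and using that squaring a series over $\mathbb{F}_2$ replaces $q$ by $q^2$, I would obtain
$$C_{4(t)}(q) \equiv \left(\sum_{i \geq 0} q^{i(i+1)/2}\right)\left(\sum_{j=0}^{m+1} q^{2j(j+1)}\right) \pmod 2.$$
Thus, modulo $2$, the coefficient $c_{4(t)}(N)$ equals the number of pairs $(i,j)$ with $i \geq 0$, $0 \leq j \leq m+1$, and $i(i+1)/2 + 2j(j+1) = N$.

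Next I would multiply such a representation by $8$ and complete the square, producing the key identity
$$8N + 5 = (2i+1)^2 + (4j+2)^2.$$
Consequently the map $(i,j) \mapsto (2i+1,\,4j+2)$ injects the set of pairs counted above into the set of representations of $8N+5$ as a sum of two squares. Hence if $8N+5$ is \emph{not} a sum of two squares at all, then there are no such pairs, and $c_{4(t)}(N) \equiv 0 \pmod 2$. The entire problem is therefore reduced to showing that, for the prescribed arithmetic progression, $8N+5$ fails to be a sum of two squares.

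The arithmetic input is a single $p$-adic valuation computation. Writing $N := p^2 n + kp - 2^{-1} - 2^{-3}$ with the inverses taken modulo $p^2$, and using $8 \cdot 2^{-1} \equiv 4$ and $8 \cdot 2^{-3} \equiv 1 \pmod{p^2}$, I would show
$$8N + 5 \equiv 8kp \pmod{p^2}.$$
Since $p$ is odd and $k \not\equiv 0 \pmod p$, this says that $8N+5$ is divisible by $p$ but not by $p^2$, i.e. the exponent of $p$ in the factorization of $8N+5$ is exactly $1$. Because $p \equiv 3 \pmod 4$, the classical two-squares criterion (a positive integer is a sum of two squares precisely when every prime congruent to $3 \pmod 4$ occurs to an even power) forces $8N+5$ to admit no representation as a sum of two squares, and the preceding paragraph then yields $c_{4(t)}(N) \equiv 0 \pmod 2$.

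The routine portions are the freshman's-dream reduction, the completion of the square, and the appeal to the two-squares theorem; the genuine content lies in discovering the correct substitution. The value of the argument is that the peculiar shift $-2^{-1} - 2^{-3}$ in the statement is engineered so that $8N+5 \equiv 8kp \pmod{p^2}$, which is exactly what pins the $p$-adic valuation at $1$ and triggers the $p \equiv 3 \pmod 4$ obstruction. I expect the main work to be in setting up this identity and congruence cleanly and in verifying the valuation claim, rather than in any hard estimate; notably, the finite bound $j \le m+1$ on the second factor plays no role in the end, since the obstruction kills \emph{all} candidate representations simultaneously.
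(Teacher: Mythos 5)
Your proof is correct, and its first half coincides with the paper's: the same invocation of Theorem \ref{4m+2}, the same freshman's-dream reduction of the squared factor modulo $2$ (so that the parity of $c_{4(t)}(N)$ counts representations $N = T_i + 4T_j$ with $T$ triangular), and ultimately the same arithmetic obstruction coming from $p \equiv 3 \pmod 4$. The finish, however, is packaged genuinely differently. The paper stays local: it completes the square modulo $p^2$, writing $N \equiv 2(k+2^{-1})^2 + 2^{-1}(j+2^{-1})^2 - 2^{-1} - 2^{-3} \pmod{p^2}$, and then splits into two cases --- either both completed variables are $\equiv 0 \pmod p$, which forces $N \equiv -2^{-1}-2^{-3} \pmod{p^2}$ and is ruled out by $k \not\equiv 0 \pmod p$, or else one obtains $(x/y)^2 \equiv -2^{-2} \pmod p$, impossible since $-1$ is a quadratic non-residue. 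You instead complete the square over $\mathbb{Z}$ to get the exact identity $8N+5 = (2i+1)^2 + (4j+2)^2$, compute $v_p(8N+5)=1$ from the engineered shift, and cite Fermat's two-squares theorem. The two finishes are equivalent in content: the two-squares theorem is proved by precisely the paper's local argument, and your valuation claim subsumes their case split (their case $x \equiv y \equiv 0 \pmod p$ is exactly the possibility that $p^2 \mid 8N+5$, which the hypothesis $k \not\equiv 0 \pmod p$ excludes). What your route buys is transparency: it makes plain that the peculiar shift $-2^{-1}-2^{-3}$ and the condition $k \not\equiv 0 \pmod p$ serve only to pin the $p$-adic valuation of $8N+5$ at exactly $1$, it replaces the case analysis with a single classical citation, and it works with an honest integer identity rather than congruences carrying inverses inside squares. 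What the paper's route buys is self-containment, needing nothing beyond the fact that $-1$ is a non-residue modulo a prime $p \equiv 3 \pmod 4$.
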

\begin{proof} From Theorem \ref{stgenfun}, we have \begin{align*} C_{4(4m+2)}(q) &= \left( \sum_{n=0}^\infty q^{\binom{n+1}{2}} \right)\left( \sum_{k=0}^{m+1} q^{2 \binom{k+1}{2}} \right)^2 \\
 &\equiv_2 \left( \sum_{n=0}^\infty q^{\binom{n+1}{2}} \right)\left( \sum_{k=0}^{m+1} q^{4 \binom{k+1}{2}} \right).\end{align*}
 
Hence any integer which has an odd number of $4 \pmod{4m+2}$ cores must be representable as the sum of a triangular number and 4 times a triangular number.  Choose a prime $p \equiv 3 \pmod{4}$ and let $N$ be so representable.  Then by completing the square we have that \begin{align*}N &\equiv 4 \binom{k+1}{2} + \binom{j+1}{2} \pmod{p^2} \\
 &\equiv 2(k+2^{-1})^2 + 2^{-1} (j+2^{-1})^2 - 2^{-1} - 2^{-3} \pmod{p^2} .
\end{align*}

Let $ r \equiv -2^{-1} - 2^{-3} \pmod{p^2}$ and suppose that $N \equiv r \pmod{p}$.  Then we must have for some $x, y \in \mathbb{Z}$ that $$2x^2 \equiv -2^{-1} y^2 \pmod{p}.$$

This holds if $x \equiv y \equiv 0 \pmod{p}$, in which case $N \equiv r \pmod{p^2}$.  If not, then neither $x$ nor $y$ can be divisible by $p$, and we may write $$\left( \frac{x}{y} \right)^2 \equiv -2^{-2} \pmod{p}.$$  But $-1$ is not a quadratic residue for $p \equiv 3 \pmod{4}$, and so this congruence has no nonzero solutions.  Hence the claimed residues cannot be represented by $T_k + 4T_j$ for any triangular numbers $T_k$ and $T_j$, and the theorem holds.
\end{proof}

\begin{example} The following initial examples of Theorem \ref{4(t)} are calculated for $p = 3$, in which case $r = 5$, and $p=7$, in which case $r = 30$: for all $n \geq 0$, 
\begin{align*}
c_{4(t)}(9n+2) &\equiv 0 \pmod{2} \, , \\ 
c_{4(t)}(9n+8) &\equiv 0 \pmod{2} \, , \text{\ \ and}\\ 
c_{4(t)}(49n+k) &\equiv 0 \pmod{2}
\end{align*}
for $k\in \{ 2,9,16,23,37,44\}$.
\end{example}
\noindent \textbf{Remark:} In fact, as these congruences hold by the same argument in the large-$t$ limit, these are refinements of known congruences for the 4-core partitions (see Hirschhorn and Sellers \cite{HS_EJC} for $c_4(9n+k)$ and Corollary 6 of Chen \cite{Chen} for $c_4(49n+k)$) under truncation of one factor in its generating function, for we have for the 4-core partitions 
\begin{align*}
\sum_{n=0}^\infty c_4(n) q^n &= \frac{(q^4;q^4)_\infty^4}{(q;q)_\infty} \\
&= \frac{(q^4;q^4)_\infty}{(q;q)_\infty} (q^4;q^4)_\infty^3 \\
&\equiv_2 \frac{(q^2;q^2)_\infty^2}{(q;q)_\infty} (q^4;q^4)_\infty^3 \text{\ \ \ using Theorem \ref{DreamCong} }\\
&\equiv_2 \left( \sum_{n=0}^\infty q^{\binom{n+1}{2}} \right)\left( \sum_{k=0}^\infty q^{4 \binom{k+1}{2}} \right) \text{\ \ \ using Theorem \ref{JacobiCube}. }
\end{align*}

Turning to the case $d=3$, we have the following.

\begin{theorem}\label{3mod1} For all $m\geq 0$, \begin{align*}C_{3(3m+1)}(q) &= -q^{3(m+1)^2+2(m+1)} + \sum_{j=0}^{m+1}q^{j^2+j} \sum_{\ell=-j}^{m+1}q^{\ell j + \ell^2 + \ell} \\ &= -q^{(3m+5)(m+1)} + \sum_{j=0}^{m+1} \sum_{\ell=-j}^{m+1} q^{2 \binom{j+\ell+1}{2} - \ell j}.\end{align*}
\end{theorem}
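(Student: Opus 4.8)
The plan is to reduce the problem to the enumeration of ordinary $3$-cores and then to impose the remaining hook restrictions as inequalities on a two-parameter description. First I would invoke the classical fact that a partition has no hook of length $\tau$ if and only if it has no hook of length divisible by $\tau$ (the abacus proof: a hook of length $k\tau$ forces, along one runner, a bead immediately above an empty slot, hence a hook of length exactly $\tau$). Since $3$ is among the forbidden lengths here, every $3\pmod{3m+1}$-core is in particular a $3$-core. Conversely, a $3$-core already avoids every forbidden length divisible by $3$, namely the lengths $3 + k(3m+1)$ with $k \equiv 0 \pmod 3$, so the $3\pmod{3m+1}$-cores are exactly the $3$-cores that in addition avoid the hook lengths $3 + k(3m+1)$ with $k \not\equiv 0 \pmod 3$; the two smallest such lengths are $3m+4$ and $6m+5$.

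Next I would use the standard abacus parametrization of $3$-cores by a pair of integers $(j,\ell)$, under which the size of the corresponding $3$-core is the sum of triangular numbers $\binom{j+1}{2} + \binom{\ell+1}{2} + \binom{j+\ell+1}{2}$. A direct expansion shows this equals $2\binom{j+\ell+1}{2} - \ell j = j^2 + j\ell + \ell^2 + j + \ell$, which is precisely the exponent appearing in the statement; this is the reason the claimed generating function is a sum of such monomials. In this step I would also record, in terms of $(j,\ell)$, the multiset of hook lengths occurring in the associated $3$-core, since the avoidance conditions must ultimately be read off from these.

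Finally I would translate the remaining avoidance conditions into the summation region. The lower constraints $j \ge 0$ and $\ell \ge -j$ amount to choosing a fundamental domain for the symmetry of the form $\binom{j+1}{2} + \binom{\ell+1}{2} + \binom{j+\ell+1}{2}$, which is invariant under the $S_3$-action permuting $j$, $\ell$, and $-1-j-\ell$, so that each $3$-core is represented exactly once; the upper constraints $j \le m+1$ and $\ell \le m+1$ should come from forbidding the hooks $3m+4$ and $6m+5$, the remaining larger forbidden lengths being automatically absent once these two are excluded. Running $(j,\ell)$ over this region then counts the $3\pmod{3m+1}$-cores, except that the corner point $(m+1,m+1)$ yields a $3$-core of size $3(m+1)^2 + 2(m+1)$ that does possess a forbidden hook; removing it accounts for the correction term $-q^{3(m+1)^2 + 2(m+1)}$.

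I expect the main obstacle to be the precise bookkeeping in this last step: expressing the hook lengths of a $3$-core cleanly in terms of $(j,\ell)$, showing that avoidance of $3m+4$ and $6m+5$ yields exactly the stated bounds and implies avoidance of all larger forbidden lengths, and verifying that the chosen wedge counts each admissible $3$-core exactly once with the single point $(m+1,m+1)$ being the only spurious contribution. Getting the boundary of the region and the lone correction term exactly right, rather than off by stray boundary lattice points, is the delicate part.
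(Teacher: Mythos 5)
Your overall strategy is the one the paper uses: an abacus argument in which the $3\pmod{3m+1}$-cores are parametrized by two integers attached to the three runners, the exponent $j^2+j\ell+\ell^2+j+\ell$ is the size of the corresponding core, and the two smallest forbidden lengths not divisible by $3$ cut out a region of summation with a single corner removed. Your opening reduction is also correct: a $3\pmod{3m+1}$-core is exactly a $3$-core that additionally avoids the lengths $3+k(3m+1)$ with $k\not\equiv 0\pmod 3$, the smallest being $3m+4$ and $6m+5$. However, the one structural claim you do try to justify is justified incorrectly. The wedge $j\geq 0$, $\ell\geq -j$ is \emph{not} a fundamental domain for the $S_3$-action permuting $(j,\ell,-1-j-\ell)$: the points $(1,0)$ and $(0,1)$ lie in a single orbit, both lie in the wedge, and they correspond to the two \emph{distinct} $3$-cores $(1,1)$ and $(2)$ of size $2$. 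The correct reason the wedge bijects with $3$-cores is the abacus itself: normalize so that position $0$ is the first spacer of the bead sequence; then avoiding $3$-hooks forces each runner to be a block of beads followed by spacers, $j$ is the number of beads on runner $1$ and $j+\ell$ the number on runner $2$ (first spacers at positions $3j+1$ and $3(j+\ell)+2$), and the constraints $j\geq 0$, $j+\ell\geq 0$ are exactly nonnegativity of these bead counts. The $S_3$ symmetry of the quadratic form identifies conjugate, generally different, partitions of equal size and is irrelevant to bijectivity.

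The more serious issue is that the entire content of the theorem --- that avoiding $3m+4$ and $6m+5$ carves out precisely the box $0\leq j\leq m+1$, $-j\leq \ell\leq m+1$ minus the corner $(m+1,m+1)$, and that all larger forbidden lengths come for free --- is deferred (``should come from,'' ``I would record \dots the multiset of hook lengths'') rather than carried out, and your guessed attribution of constraints to hooks is off. Once set up as above, the step is short: a hook of length $L$ exists if and only if some runner $r$ has $s_r+L<s_{r'}$, where $s_r$ is the first spacer position on runner $r$ and $r'\equiv r+L\pmod 3$; so avoiding $L$ is the system $s_r+L\geq s_{r'}$ for all $r$. Taking $L=3m+4$ gives $s_1\leq 3m+4$ and $s_2\leq s_1+3m+4$, i.e.\ \emph{both} upper bounds $j\leq m+1$ and $\ell\leq m+1$; taking $L=6m+5$ gives $s_2\leq 6m+5$, i.e.\ $j+\ell\leq 2m+1$, whose only violation inside the box is the corner $(m+1,m+1)$, of size $3(m+1)^2+2(m+1)$; and any larger forbidden length yields a strictly weaker inequality of the same shape, so it is automatic. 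In your sketch the corner is removed by post hoc inspection and the bound $\ell\leq m+1$ is attributed to $6m+5$; the region you describe happens to be right, but without the runner-by-runner inequalities above there is no proof that it is right, and that verification is exactly what the paper supplies.
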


\begin{theorem}\label{3mod2} For all $m\geq 0$, \begin{align*}C_{3(3m+2)}(q) &= -q^{(3m+4)(m+2)} + \sum_{j=0}^{m+1}q^{j^2+j} \sum_{\ell=-j}^{m+2}q^{\ell j + \ell^2} .\end{align*}
\end{theorem}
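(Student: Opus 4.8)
The plan is to reduce the statement to an enumeration of ordinary $3$-cores carrying extra hook restrictions, and then to parametrize those by an explicit two-parameter family on the abacus. The reduction is immediate in one direction: a $3 \pmod{t}$-core avoids hook length $3$ (the initial term $s=3$ of the avoided progression), so by the definition of a core it is a genuine $3$-core. Conversely a $3$-core is a $3 \pmod{t}$-core exactly when it has no hook of length $\equiv 3 \pmod{t}$. Since it is standard that a $3$-core has no hook length divisible by $3$, the only binding conditions come from hooks of length $3+kt$ with $3 \nmid k$, the smallest being $3+t = 3m+5$. Thus I would recast the theorem as: enumerate by size the $3$-cores whose hook set avoids $\{3+kt : k \ge 1\}$.

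Next I would encode a $3$-core by its abacus on three runners, i.e. by the three flush bead-counts (charges) $K_0,K_1,K_2$ with $K_0+K_1+K_2 = 0$, equivalently by two integer parameters. Two features are read directly off this data. First, the multiset of hook lengths is the set of differences (bead position) $-$ (hole position) taken over bead--hole pairs with the hole below the bead; because the residue of such a difference is the difference of the two runners, no hook is divisible by $3$, consistent with the reduction above. Second, the size is a quadratic form in the charges. After completing the square and applying the standard unimodular change of coordinates that underlies the classical appearance of the $A_2$ norm form $j^2+j\ell+\ell^2$ in the sizes of $3$-cores, I would introduce coordinates $(j,\ell)$ in which the size is exactly $j^2 + j + \ell j + \ell^2$, matching the exponent in the statement.

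It then remains to translate ``no hook of length $3+kt$'' into inequalities on $(j,\ell)$. I would show that the admissible $3$-cores are precisely the lattice points of the rectangle $0 \le j \le m+1$, $-j \le \ell \le m+2$, with the single exception of the extreme corner $(j,\ell) = (m+1,m+2)$, whose $3$-core does carry a forbidden hook. Summing $q^{\mathrm{size}}$ over the rectangle and deleting that one corner produces the double sum together with the correction $-q^{(3m+4)(m+2)}$, since the corner has size $(m+1)^2 + (m+1) + (m+1)(m+2) + (m+2)^2 = (3m+4)(m+2)$. The companion Theorem~\ref{3mod1} would follow by the identical method, with the region and the list of automatically excluded residues adjusted for the case $t \equiv 1 \pmod 3$.

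The main obstacle is the hook-length bookkeeping in this last step. The hook set of a $3$-core can skip values -- for instance $(3,1,1)$ has maximal hook $5$ yet no hook of length $4$ -- so admissibility is genuinely not a bound on the largest hook, and the rectangular region cannot be replaced by a simple degree cutoff. One must determine from the abacus coordinates exactly when a hook of length $3+kt$ first appears, and then establish the sharp claim that every lattice point of the rectangle except the one corner is hook-avoiding while the corner alone is not. Pinning down this precise boundary, and thereby isolating the unique correction term, is the delicate heart of the argument.
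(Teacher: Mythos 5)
Your outline follows essentially the same route as the paper's proof: encode the partition on a $3$-runner abacus, note that avoiding hooks of length $3$ forces each runner to be flush (a prefix of beads followed only by spacers), parametrize the candidates by two integer coordinates, and sum $q^{\mathrm{size}}$ over an admissible region that is a rectangle minus one corner. Your reduction to $3$-cores avoiding $\{3+kt : k \geq 1,\ 3 \nmid k\}$ is correct, and your computation of the corner's size $(3m+4)(m+2)$ checks out. But there is a genuine gap, and you have flagged it yourself: the claim that the admissible set is exactly $0 \leq j \leq m+1$, $-j \leq \ell \leq m+2$ with only the corner $(j,\ell)=(m+1,m+2)$ removed is asserted, never proved. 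That claim \emph{is} the theorem --- the quadratic form for the size of a $3$-core and the summation over a rectangle are routine --- so deferring it as ``the delicate heart of the argument'' leaves the proof unfinished at precisely the point where all the work lies.

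The idea that closes the gap, and that the paper uses, is a sharper form of the abacus fact you half-state: a hook of length $y$ exists if and only if some position $x$ holds a spacer while position $x+y$ holds a bead. Once the runners are flush, this implies that the hook lengths lying in a fixed nonzero residue class modulo $3$ form an \emph{initial segment} of that class, whose maximum is governed by the difference of the first-spacer positions on the two runners involved. (This also dissolves your worry about hook sets skipping values: the full hook set of a $3$-core can indeed skip, as in your example $(3,1,1)$, but within a single residue class modulo $3$ it cannot --- there $4$ and $5$ lie in different classes.) Consequently each forbidden length $3+kt$ with $3 \nmid k$ translates into a linear inequality between first-spacer positions; only the smallest two, $3+t$ and $3+2t$, actually bind, and working them out per pair of runners yields exactly the stated ranges for $j$ and $\ell$. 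The lone corner is then excluded because that particular $3$-core contains a hook of length $3+2t = 6m+7$ (forbidden since $3 \nmid 2$), running from the spacer at position $0$ on runner $0$ to the last bead of the would-be configuration; this is the analogue of the paper's observation, in the $3 \pmod{3m+1}$ case it treats in detail, that the spacer at position $0$ demands a spacer at position $3+2t$, which is incompatible with the extreme choice of both first-spacer positions. Without this per-residue-class bookkeeping carried out, your proposal is a correct plan with the right answer, not yet a proof.
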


\begin{proof} In order to state our proofs we require a brief overview of the \emph{abacus} of a partition. Mark the outer boundary of a partition with white \emph{spacers} on the horizontal unit segments and black \emph{beads} on the vertical segments.  Allow for an indefinite extension of black beads prior to the diagram and white spacers afterward.  Label the positions of beads and spacers with $\mathbb{Z}$, increasing to the north and east in the profile (i.e., from smaller toward larger parts), starting with 0 at the first spacer that appears in this orientation.  As an example, here we show the diagram described above for the partition $(5,3,3)$.

\begin{center}
\begin{tikzpicture}[scale=0.5]
\draw (0,-5) -- (0,0) -- (7,0);
\draw (0,-3) -- (3,-3) -- (3,0);
\draw (0,-2) -- (3,-2);
\draw (0,-1) -- (5,-1) -- (5,0);
\draw (1,0) -- (1,-3);
\draw (2,0) -- (2,-3);
\draw (4,0) -- (4,-1);
\draw (0.5,-3) circle (5pt)
(1.5,-3) circle (5pt)
(2.5,-3) circle (5pt)
(3.5,-1) circle (5pt)
(4.5,-1) circle (5pt)
(5.5,0) circle (5pt)
(6.5,0) circle (5pt);
\filldraw [black] (0,-4.5) circle (5pt)
(0,-3.5) circle (5pt)
(3,-2.5) circle (5pt)
(3,-1.5) circle (5pt)
(5,-0.5) circle (5pt);
\node at (-0.7,-4.5) {-2};
\node at (-0.7,-3.5) {-1};
\node at (0.5,-3.5) {0};
\node at (1.5,-3.5) {1};
\node at (2.5,-3.5) {2};
\node at (3.5,-2.5) {3};
\node at (3.5,-1.5) {...};
\node at (5.5,0.5) {8};
\node at (6.5,0.5) {...};
\end{tikzpicture}
\end{center}

Now straighten out the profile.  Since the partition starts with its first spacer and ends with its last bead, we have all information about the partition in its \emph{bead sequence}.

\begin{center}
\dots \begin{tikzpicture}[scale=0.5]
\draw (0,0) -- (12,0);
\filldraw [black] (0.5,0) circle (5pt)
(1.5,0) circle (5pt)
(5.5,0) circle (5pt)
(6.5,0) circle (5pt)
(9.5,0) circle (5pt);
\draw (2.5,0) circle (5pt)
(3.5,0) circle (5pt)
(4.5,0) circle (5pt)
(7.5,0) circle (5pt)
(8.5,0) circle (5pt)
(10.5,0) circle (5pt)
(11.5,0) circle (5pt);
\node at (0.5,0.5) {...};
\node at (1.5,0.5) {-1};
\node at (2.5,0.5) {0};
\node at (3.5,0.5) {1};
\node at (4.5,0.5) {2};
\node at (5.5,0.5) {3};
\node at (6.5,0.5) {4};
\node at (7.5,0.5) {5};
\node at (8.5,0.5) {6};
\node at (9.5,0.5) {7};
\node at (10.5,0.5) {8};
\node at (11.5,0.5) {...};
\end{tikzpicture} \dots
\end{center}

Finally, when we are interested in properties modulo $d$, it is useful to fold the abacus back on itself, taking positions $d$ at a time to create the $d$-\emph{runners}.  Place spacers and beads in positions $i \pmod{d}$ on the runner $i$, with labels from $jd+0$ to $jd+(d-1)$ aligned descending vertically, $j$ increasing to the right.  We illustrate here with the 3-abacus of the example partition.

\begin{center}
\begin{tikzpicture}[scale=0.5]
\draw (-1,0) -- (4,0);
\draw (-1,-1) -- (4,-1);
\draw (-1,-2) -- (4,-2);
\draw (0.5,0) circle (5pt)
(0.5,-1) circle (5pt)
(0.5,-2) circle (5pt)
(1.5,-2) circle (5pt)
(2.5,0) circle (5pt)
(2.5,-2) circle (5pt)
(3.5,0) circle (5pt)
(3.5,-1) circle (5pt)
(3.5,-2) circle (5pt);
\filldraw [black] (1.5,0) circle (5pt)
(1.5,-1) circle (5pt)
(2.5,-1) circle (5pt)
(-0.5,0) circle (5pt)
(-0.5,-1) circle (5pt)
(-0.5,-2) circle (5pt);
\node at (-0.9,-1.5) {-1};
\node at (0.2,0.5) {0};
\node at (0.2,-0.5) {1};
\node at (0.2,-1.5) {2};
\node at (1.2,0.5) {3};
\node at (1.2,-0.5) {4};
\node at (1.2,-1.5) {5};
\node at (2.2,0.5) {6};
\node at (2.2,-0.5) {7};
\node at (2.2,-1.5) {8};
\node at (4.5, 0) {...};
\node at (4.5, -1) {...};
\node at (4.5, -2) {...};
\node at (-1.5, 0) {...};
\node at (-1.5, -1) { ...};
\node at (-1.5, -2) {...};
\node at (-4.5, 0) {0 runner: };
\node at (-4.5, -1) {1 runner: };
\node at (-4.5, -2) {2 runner: };
\end{tikzpicture}
\end{center}

% June30 Add much verbal explanation.  Try the 11 abaci for C_{3(4)}.

To prove Theorems \ref{3mod1} and \ref{3mod2} we now consider the 3-runners.  The arguments are similar so we only go through the $3 \pmod{3m+1}$ case in detail.

The abacus below gives all possible places in which we may have beads in such a partition.  The ellipses in the middle indicate elided segments consisting of $m-2$ positions on each runner.  This means that the beads on runner 1 (the second row) extend $m+1$ positions past the beads on runner 0, and of beads on runner 2 extend $m$ positions beyond the beads on runner 1.  

\begin{center}
\dots
\begin{tikzpicture}[scale=0.5]
\draw (0,0) -- (9,0);
\draw (0,-1) -- (9,-1);
\draw (0,-2) -- (9,-2);
\node at (6,-0.7) [black] {...};
\node at (3,-0.7) [black] {...};
\draw (2.5,0) circle (5pt)
(3.5,0) circle (5pt)
(4.5,0) circle (5pt)
(5.5,0) circle (5pt)
(6.5,0) circle (5pt)
(7.5,0) circle (5pt)
(8.5,0) circle (5pt)
(5.5,-1) circle (5pt)
(6.5,-1) circle (5pt)
(7.5,-1) circle (5pt)
(8.5,-1) circle (5pt)
(8.5,-2) circle (5pt)
(7.5,-2) circle (5pt);
\filldraw [black] (0.5,0) circle (5pt)
(1.5,0) circle (5pt)
(0.5,-1) circle (5pt)
(1.5,-1) circle (5pt)
(2.5,-1) circle (5pt)
(3.5,-1) circle (5pt)
(4.5,-1) circle (5pt)
(0.5,-2) circle (5pt)
(1.5,-2) circle (5pt)
(2.5,-2) circle (5pt)
(3.5,-2) circle (5pt)
(4.5,-2) circle (5pt)
(5.5,-2) circle (5pt)
(6.5,-2) circle (5pt);
\end{tikzpicture}
\dots
\end{center}

We begin by noting that the partition must begin with a spacer in partition 0, and that a hook of length $y$ occurs exactly when a spacer at place $x$ is followed by a bead at place $x+y$; in particular, a hook of length 3 occurs when a spacer is followed by a bead on the same runner.   Since we wish to avoid hooks of length 3, inclusion of a spacer forces the remainder of the runner to consist of spacers, and so $3 \pmod{3m+1}$-cores can be parametrized by the positions of the first spacers on the 1 and 2 runners.

Having a spacer in position 0, and requiring the avoidance of hooks of length 3, $3m+4$, and $6m+5$, means that we must have spacers in these positions, which indeed are the positions of the spacers in the diagram.  Now suppose that we place a spacer no later than position $3m+4$ on runner 1, in position $3j+1$, where $0 \leq j \leq m+1$.  There must be a spacer in position $\ell = (3j+1) + (3m+4)$ on runner 2, meaning at most a ``height'' of $m+1$ places later, and possibly earlier.  All positions $(3j+1) +(6m+5)$ are already filled with spacers due to the spacer at position 0.

Furthermore, any position earlier than $3j+2$ is acceptable on runner 2, since all necessary spacers on runner 0 are already placed, and any position $(3j+2) + (6m+5)$ already has a spacer.  Hence any position $3\ell+2$ on runner 2 no later than $(3j+1) + (3m+4)$ is acceptable, and so $0 \leq \ell \leq m+1$ is a valid range of heights $\ell$ for the last spacer, with a single exception if $j = m+1$.  In that case, the $\ell = m+1$ choice is forbidden because the spacer at position 0 demands a spacer at position $6m+5$, so position $(3m+4)+(3m+4)$ cannot be the first spacer on the 2 runner.

We construct the generating function by reading the possible sizes of parts, which consist of the numbers of white spacers prior to any black bead.  The sum is simplified by taking the entire range of heights of $\ell$ relative to $j$, from $-j$ to $m+1$, and subtracting the improper case described above.  Parts appear with difference 1 or 2 in arithmetic progression, so the sums are the doubled triangular numbers claimed, with small shifts.  The result is the claimed generating function.

A similar argument establishes the $3 \pmod{3m+2}$ case.

\end{proof}

It is an immediate consequence of the abacus argument that $c_{3(3m+1)}(n) \leq c_{(3,3m+4)}(n)$, with the $3 \pmod{3m+1}$-cores being a strict subset of the simultaneous $(3,3m+4)$-cores.  In fact, the $3 \pmod{3m+1}$-cores miss exactly the one largest simultaneous $(3,3m+4)$-core, which has a largest hook of length $6m+5$.

\begin{example} Let $m=0,1,2.$ Then Theorem \ref{3mod1} produces the following:
\begin{align*}
C_{3(4)}(q) &= 1 + q + 2 q^2 + 2 q^4 + q^5 + 2 q^6 + 2 q^{10}, \\
C_{3(7)}(q) &= 1 + q + 2 q^2 + 2 q^4 + q^5 + 2 q^6 + q^8 + 2 q^9 + 2 q^{10} \\ &+ 2 q^{12} + q^{16} + 2 q^{17} + 2 q^{24}, \\
C_{3(10)}(q) &= C_{3(7)}(q) + 2 q^{14} + 2 q^{16} + 2 q^{20} + 2 q^{26} + q^{33} + 2 q^{34} + 2 q^{44} .
 \end{align*}
\end{example}
\begin{example}
So using Theorem \ref{stgenfun}, and the $C_{3(4)}$ and $C_{2(3)}$ calculated above, we can give, for instance, the following generating functions: 
$$C_{6(8)}(q) = \left( \sum_{n=0}^\infty q^{\binom{n+1}{2}} \right) \left( 1 + q^2 + 2 q^4 + 2 q^8 + q^{10} + 2 q^{12} + 2 q^{20} \right)^2,$$

 $$C_{6(9)}(q) =\left( \frac{(q^3;q^3)_\infty^3}{(q;q)_\infty} \right) \left( 1 + q^3 + q^{9} \right)^3,$$
 
 $$C_{9(12)}(q) =\left( \frac{(q^3;q^3)_\infty^3}{(q;q)_\infty} \right) \left( 1 + q^3 + 2 q^{6} + 2 q^{12} + q^{15} + 2 q^{18} + 2 q^{30} \right)^3.$$
 \end{example}
 The 3-cores have greater density than the 2-cores; while not every whole number possesses a 3-core, unlike the 2-cores the counting function $c_3(n)$ is nonzero on a set of positive density and grows indefinitely large. See \cite{HS_3cores} for more information related to arithmetic properties satisfied by $c_3(n)$. Related results on 3-cores can be found in \cite{BN} and \cite{HO}.  Heuristically, then, we would expect that congruences for the functions above are much less elementary to prove, and arise less often.  We can show the following.
 
 \begin{theorem} For all $n\geq 0$,  $$c_{6(9)}(16n+12) \equiv 0 \pmod{2},$$ and $$c_{6(9)}(400n+k) \equiv 0 \pmod{3}$$ for $k \in \{38, 88, 118, 168, 198, 248, 278, 328 \}.$
 \end{theorem}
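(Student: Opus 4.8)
The plan is to work throughout from the factorization $C_{6(9)}(q) = \frac{(q^3;q^3)_\infty^3}{(q;q)_\infty}(1+q^3+q^9)^3$ supplied by Theorem \ref{stgenfun}, reducing the rational factor (the generating function for $3$-cores) and the polynomial factor separately modulo $2$ and modulo $3$. Modulo $2$ one has $(1+q^3+q^9)^3 \equiv 1+q^3+q^6+q^{15}+q^{18}+q^{21}+q^{27} \pmod 2$, while modulo $3$ the freshman's dream gives $(1+q^3+q^9)^3 \equiv 1+q^9+q^{27} \pmod 3$. The two congruences then become independent exercises in extracting arithmetic progressions from a product of a theta-type series with a short polynomial.

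For the modulus $2$ claim I would use the classical description of the parity of $c_3$: the coefficient $c_3(n)$ is odd precisely when $3n+1$ is a perfect square (equivalently, $\frac{(q^3;q^3)_\infty^3}{(q;q)_\infty} \equiv \sum_{j \in \mathbb{Z}} q^{3j^2+2j} \pmod 2$). Multiplying by the reduced polynomial, the coefficient of $q^N$ in $C_{6(9)}(q)$ modulo $2$ equals the number of $e \in S := \{0,3,6,15,18,21,27\}$ for which $3(N-e)+1$ is a perfect square, since $3j^2+2j=N-e$ is equivalent to $(3j+1)^2 = 3(N-e)+1$ and the residue $3j+1 \pmod 3$ pins down a unique $j$. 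The claim then reduces to a finite quadratic-residue verification: for $N$ in the stated progression one checks that $3(N-e)+1$ is a quadratic nonresidue modulo $16$ for every $e \in S$ (the squares modulo $16$ being $\{0,1,4,9\}$), so that no $e$ yields a representation and the count vanishes.

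For the modulus $3$ claim, Theorem \ref{DreamCong} gives $\frac{(q^3;q^3)_\infty^3}{(q;q)_\infty} \equiv \frac{(q^9;q^9)_\infty}{(q;q)_\infty} \pmod 3$, hence $C_{6(9)}(q) \equiv \frac{(q^9;q^9)_\infty}{(q;q)_\infty}(1+q^9+q^{27}) \pmod 3$. I would apply the $2$-dissection of Corollary \ref{XYCor} and, since every target residue modulo $400$ is even, isolate the even part; writing $Q=q^2$ it becomes
\[
G(Q) \equiv (Q;Q)_\infty^4 + (Q^5+Q^{14})\frac{(Q^{18};Q^{18})_\infty}{(Q^2;Q^2)_\infty} \pmod 3 ,
\]
and the eight target residues translate into $m \equiv 3,4 \pmod 8$ together with $m \equiv 9,14,19,24 \pmod{25}$ for the index $m=N/2$. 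The first summand I would treat with a $5$-adic obstruction: using $(Q;Q)_\infty^4 \equiv (Q;Q)_\infty (Q^3;Q^3)_\infty \pmod 3$ (Theorem \ref{DreamCong}) and Theorem \ref{EulerPNT}, its nonzero coefficients occur at $m$ with $24m+4 = u^2+3v^2$ where $u \equiv v \equiv -1 \pmod 6$; since $-3$ is a nonresidue modulo $5$, the congruence $u^2+3v^2 \equiv 0 \pmod 5$ forces $5\mid u$ and $5\mid v$, whence $m \equiv 4 \pmod{25}$, a class none of the targets meet, so this summand contributes $0$. The second summand I would reduce by applying Corollary \ref{XYCor} a second time (with $q \mapsto Q^2$), giving $\frac{(Q^{18};Q^{18})_\infty}{(Q^2;Q^2)_\infty} \equiv (Q^4;Q^4)_\infty^4 + Q^2\frac{(Q^{72};Q^{72})_\infty}{(Q^8;Q^8)_\infty} \pmod 3$; its nonzero coefficients lie only at exponents $\equiv 0 \pmod 4$ and $\equiv 2 \pmod 8$, whereas $m \equiv 3,4 \pmod 8$ forces the relevant index ($m-5$ or $m-14$) to be $\equiv 6 \pmod 8$, missing both supports, so this summand is $\equiv 0 \pmod 3$ as well.

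I expect the main obstacle to be the modulus $3$ statement, precisely because the congruence modulus ($3$) is coprime to the progression modulus ($400 = 16 \cdot 25$): the vanishing is not a single local obstruction but a combination of a $5$-adic obstruction (mod $25$) annihilating the $(Q;Q)_\infty^4$ term and a $2$-adic obstruction (mod $16$, emerging only after a second Xia--Yao dissection) annihilating the remaining term, and arranging the iterated dissection so that the two together cover exactly the eight listed residues is the delicate point. By contrast the modulus $2$ claim is routine once the theta reduction is in hand, amounting to a single quadratic-residue check modulo $16$.
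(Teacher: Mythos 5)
Your modulo 2 argument contains a step that fails, and it cannot be repaired, because the congruence $c_{6(9)}(16n+12) \equiv 0 \pmod{2}$ is false as printed. Your reduction is sound and is the same as the paper's (the paper phrases it via Robbins' result $\frac{(q^3;q^3)_\infty^3}{(q;q)_\infty} \equiv \sum_{n\in\mathbb{Z}} q^{n(3n-2)} \pmod{2}$, you via the equivalent statement that $c_3(n)$ is odd exactly when $3n+1$ is a square): modulo 2 the coefficient of $q^N$ counts the $e \in S=\{0,3,6,15,18,21,27\}$ with $3(N-e)+1$ a perfect square. But your claimed verification breaks at $e=27$: for $N \equiv 12 \pmod{16}$ one has $N-27 \equiv 1 \pmod{16}$, so $3(N-e)+1 \equiv 4 \pmod{16}$, which \emph{is} a square modulo 16. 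Concretely, $N=28$ admits the unique representation $28 = 1 + 27$ of the required form, and indeed $c_{6(9)}(28)=37$ is odd, so the stated congruence already fails at $n=1$. What the sumset computation actually shows --- and this is exactly the computation in the paper's own proof --- is that $\{0,1,5,8\}+\{0,2,3,5,6,11,15\}$ covers every residue modulo 16 \emph{except} $9$; thus the provable statement is $c_{6(9)}(16n+9)\equiv 0 \pmod 2$, and the ``12'' in the paper's theorem statement is a typo contradicted by its own proof. Your method, carried out honestly, yields precisely this corrected statement: for $N \equiv 9 \pmod{16}$ the seven values $3(N-e)+1$ lie in $\{3,6,10,11,12,13,15\} \pmod{16}$, disjoint from the squares $\{0,1,4,9\}$.

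Your modulo 3 argument, by contrast, is correct, and part of it is a genuinely different and cleaner route than the paper's. Both you and the paper apply Corollary \ref{XYCor} twice and kill the pieces coming from $q\,(q^{36};q^{36})_\infty/(q^4;q^4)_\infty$ by $2$-adic support considerations; your version in the variable $Q=q^2$ is equivalent to the paper's analysis modulo 8 and modulo 16 in $q$ (one small slip: the shifted indices $m-5$ and $m-14$ run through the classes $5,6,7 \pmod 8$, not only $6$, but all three miss the support $\{0,2,4\} \pmod 8$, so the conclusion stands). The genuine difference is the treatment of $(q^2;q^2)_\infty^4$: the paper disposes of it by a finite check that the sums $3m(3m-1)+n(3n-1)$ miss the eight listed residues modulo 400, whereas you encode its support by $24m+4=u^2+3v^2$ with $u\equiv v\equiv -1 \pmod 6$ and use that $-3$ is a quadratic nonresidue modulo 5 to force $5\mid u$ and $5\mid v$, hence $m \equiv 4 \pmod{25}$, a class disjoint from the targets $m \equiv 9,14,19,24 \pmod{25}$. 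That argument is shorter, explains where the modulus 25 actually comes from, and would be a worthwhile replacement for the paper's brute-force verification.
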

 \begin{proof} For the congruence modulo 2, it is known (see Robbins \cite{Robbins}, Theorem 7) that the generating function for the 3-cores satisfies $$\frac{(q^3;q^3)_\infty^3}{(q;q)_\infty} \equiv_2 \sum_{n \in \mathbb{Z}} q^{n(3n-2)}.$$
One observes that $n(3n-2)$ may take residues 0, 1, 5, or 8 modulo 16, and the polynomial multiplier is $$\left( 1 + q^3 + q^{9} \right)^3 \equiv_2 1+q^3+q^6+q^{15}+q^{18}+q^{21}+q^{27}.$$  Among the possible residues modulo 16 for $$\{0, 1, 5, 8 \} + \{0, 2, 3, 5, 6, 11, 15 \},$$ the sum 9 modulo 16 does not appear.
 
For the congruences modulo 3, one has 
\begin{align*}
\frac{(q^3;q^3)_\infty^3}{(q;q)_\infty} 
&\equiv_3 
\frac{(q^9;q^9)_\infty}{(q;q)_\infty}  \text{\ \ \ using Theorem \ref{DreamCong}}\\
&\equiv_3 
(q^2;q^2)_\infty^4 + q \frac{(q^{36};q^{36})_\infty}{(q^4;q^4)_\infty} \text{\ \ \ using Corollary \ref{XYCor}} \\
&\equiv_3 
(q^2;q^2)_\infty^4 + q (q^8;q^8)_\infty^4 + q^5 \frac{(q^{144};q^{144})_\infty}{(q^{16};q^{16})_\infty},
\end{align*}
where the last congruence holds since $\frac{(q^{36};q^{36})_\infty}{(q^4;q^4)_\infty}$ is $\frac{(q^9;q^9)_\infty}{(q;q)_\infty}$ with the substitution $q \rightarrow q^4$.

Likewise, we observe that $$(1+q^3+q^9)^3 \equiv_3 1 + q^9 + q^{27}.$$

Since $\frac{(q^{144};q^{144})_\infty}{(q^{16};q^{16})_\infty}$ is a function of $q^{16}$, nonzero terms in $$q^5 \frac{(q^{144};q^{144})_\infty}{(q^{16};q^{16})_\infty}(1+q^9 + q^{27})$$ will appear with residue 5, 14, or 0 modulo 16; the residues listed in the theorem are all 6 or 8 modulo 16.  Likewise for $q (q^8;q^8)_\infty^4(1 + q^9 + q^{27})$, which is nonzero at residues 1, 10, or 28 modulo 8; none of the residues listed in the theorem qualify.  This leaves us to consider the terms that might arise from $(q^2;q^2)_\infty^4$.

%%%%%%
Next, using Theorem \ref{DreamCong} and Theorem \ref{EulerPNT}, we see that 
\begin{align*}
(q^2;q^2)_\infty^4 
&\equiv_3 
(q^6;q^6)_\infty(q^2;q^2)_\infty \\
&\equiv_3
\left( \sum_{m \in \mathbb{Z}} (-1)^m q^{3m(3m-1)}  \right) \left( \sum_{n \in \mathbb{Z}} (-1)^n q^{n(3n-1)} \right).
\end{align*}
Observing the residues modulo 400 of $9m^2-3m$ and $3n^2-n$, and taking all possible sums, we observe that the residues listed in the theorem are missed, and the theorem is proved. \end{proof}
 
% (NOTE: Some of these overlap with congruences mod 2, for a mod 6.  I can very likely extend this to an infinite family.  And I found some plausible candidates for $C_{9(12)}$ as well.)
% {\james Can you shed some more light on this?  I'd be very curious to have more details!}

\noindent \textbf{Remark: } The above are simply the first examples of a large class of related theorems which are easy to find by analyzing residues for a given modulus.  One may analogously show that $c_{6(9)}(113n+89) \equiv 0 \pmod{2}$, or that $c_{9(12)}(56n+k) \equiv 0 \pmod{2}$ for $k \in \{13, 28, 47\}$.  A natural route of inquiry would be to seek a general theorem yielding an infinitude of such congruences; the octagonal numbers are a straightforward quadratic form, but the polynomial factor introduces a complication.
 
\section{Fayers' conjecture}\label{FayersSec}

We now consider Conjecture \ref{fayersconj}.

We begin with the formula of Cho, Huh, and Sohn given in Theorem \ref{CHSthm}.  In the large-$p$ limit, only the $\ell = 0$ term survives in the inner sum.  Their formula becomes 

$$g_t(s) := \sum_{n=0}^\infty c_{s(t)}(n) = \frac{1}{s+t}\binom{s+t}{t} + \sum_{k=1}^{\lfloor s/2 \rfloor} \frac{1}{k+t}\binom{k+t}{k}\binom{s+t-1}{2k+t-1}.$$ 
(We briefly note that in order to avoid the indefinite form $0 \cdot \infty$ we may consider $p$ to be arbitrarily large though not infinite.)  With a small change in the denominators, we see that we can absorb the initial term and write $$g_t(s) = \sum_{k=0}^{\lfloor s/2 \rfloor} \frac{1}{t} \binom{k+t-1}{t-1} \binom{s+t-1}{2k+t-1}.$$

Define $$g_t^{\ast}(s) = \frac{2^{s-t}}{t!} \sum_{m=1}^t L(t,m) (s+1)^{\overline{m-1}} .$$

Our proof makes use of automated theorem proving tools in the Maple software package, including the commands \emph{sumtools} and \emph{sumrecursion}.  (We note for those verifying these computations with Maple that $s^{\overline{m}}$ is implemented by ``pochhammer(s,m)''.)  The strategy is to show that, with the $t$ and $s$ considered as indeterminates, the $g_t(s)$ and $g_t^{\ast}(s)$ satisfy the same recurrence and initial conditions.

The binomial expression for $g_t(s)$ renders it straightforwardly accessible to \emph{sumtools} and \emph{sumrecursion}, which yield that $g_t(s)$ satisfies the second-order recurrence $$2t g_t(s) = (s+3(t-1)) g_{t-1}(s) - (t-2) g_{t-2} (s)$$ with initial conditions $$g_1(s) = \sum_{k=0}^{\lfloor s/2 \rfloor} \binom{s}{2k}$$ and $$g_2(s) = \sum_{k=0}^{\lfloor s/2 \rfloor} \frac{k+1}{2} \binom{s+1}{2k+1} .$$

The polynomials $f_t(s)$ defined in \eqref{FayersPolys} can also be analyzed by \emph{sumtools} and \emph{sumrecursion}, yielding that they satisfy the second-order recurrence $$f_t(s) = (s+3(t-1))f_{t-1}(s) - 2(t-1)(t-2) f_{t-2}(s)$$ with initial conditions $$f_1(s) = L(1,1) (s+1)^{\overline{0}} = 1 \cdot 1 = 1$$ and $$f_2(s) = L(2,1) (s+1)^{\overline{0}} + L(2,2) (s+1)^{\overline{1}} = 2 \cdot 1 + 1 \cdot (s+1) = s + 3.$$

We now note that $$f_t(s) = \frac{t!}{2^{s-t}} g_t^{\ast}(s).$$  Substituting this into the recurrence for $f_t(s)$ we have $$g_t^{\ast}(s) = (s+3(t-1)) \frac{(t-1)!}{2^{s-(t-1)}} \cdot \frac{2^{s-t}}{t!} g_{t-1}^{\ast} (s) - 2(t-1)(t-2) \frac{(t-2)!}{2^{s-(t-2)}} \cdot \frac{2^{s-t}}{t!}g_{t-2}^{\ast} (s) .$$

Cross-multiplying and cancelling factors from the factorials, we find that this is equivalent to 

$$2t g_t^{\ast}(s) = (s+3(t-1)) g_{t-1}^{\ast}(s) - (t-2) g_{t-2}^{\ast} (s),$$ as desired.  It remains to show that initial conditions match, which we require for $t=1$ and $t=2$.

We have that $g_1^{\ast}(s) = \frac{2^{s-1}}{1!} (1) = 2^{s-1}$.  We calculate $$g_1(s) = \sum_{k=0}^{\lfloor s/2 \rfloor} \binom{s}{2k} = \sum_{k=0}^{\lfloor s/2 \rfloor} \binom{s-1}{2k-1} + \binom{s-1}{2k} = \sum_{j=0}^{s-1} \binom{s-1}{j} = 2^{s-1}.$$

For $t=2$, we have that $$g_2^{\ast}(s) = 2^{s-3} (s+3) \quad , \quad g_2(s) = \sum_{k=0}^{\lfloor s/2 \rfloor} \frac{k+1}{2} \binom{s+1}{2k+1}.$$

Recall the Binomial Theorem which states that, for all $n\geq 0$, 
$$(1+x)^n = \sum_{k=0}^n \binom{n}{k} x^k.$$  Hence $$\frac{(1+x)^{s+1} - (1-x)^{s+1}}{2} = \sum_{k \geq 0} \binom{s+1}{2k+1} x^{2k+1}.$$

We differentiate this expression with respect to $x$ to obtain $$\frac{1}{2} \left( (s+1)(1+x)^s + (s+1) (1-x)^s \right) = \sum_{k \geq 0} (2k+1) \binom{s+1}{2k+1} x^{2k}.$$

Then sum the previous two lines to obtain

\begin{multline*} \frac{(1+x)^{s+1} - (1-x)^{s+1}}{2} + \frac{(s+1)(1+x)^s + (s+1) (1-x)^s}{2} \\ = \sum_{k \geq 0} \left[ x \binom{s+1}{2k+1} + (2k+1) \binom{s+1}{2k+1} \right] x^{2k}.
\end{multline*} 

Set $x=1$ in this expression to obtain $$\frac{2^{s+1} + (s+1) 2^s}{2} = \sum_{k \geq 0} (2k+2) \binom{s+1}{2k+1}.$$

Divide through by 4 to obtain $$g_2(s) = \sum_{k \geq 0} \frac{k+1}{2} \binom{s+1}{2k+1} = 2^{s-3} (s+3),$$ as desired.  This confirms that $$g_t(s) = \frac{2^{s-t}f_t(s)}{t!},$$ for $f_t(s)$ a polynomial with non-negative integer coefficients.
We now prove Fayers' claims for the properties of $f_t(s)$.  First, observe that the highest-degree term of $f$ appears uniquely in the final term of the sum, $$L(t,t) (s+1)^{\overline{t-1}} = \frac{t!}{t!}\binom{t-1}{t-1} (s+1) (s+2) \dots (s+t-1).$$  Hence the polynomial $f_t(s)$ is monic and of degree $t-1$.

The remaining claim we will show is that the constant term of $f_t(s)$ is $(2^t-1)(t-1)!$.  By expansion of each rising factorial, the constant term of $f_t(s)$ is

\begin{align*} 
CT \left( f_t(s) \right) 
&= \sum_{m=1}^t L(t,m) (m-1)! \\
&= \sum_{m=1}^t \frac{t!}{m!} \binom{t-1}{m-1} (m-1)! \\ 
&= \sum_{m=1}^t \frac{t!}{m!} \frac{(t-1)!}{(t-m)! (m-1)!} (m-1)!  \\
&= (t-1)! \sum_{m=1}^t \frac{t!}{m! (t-m)!} \\
&= (t-1)! \sum_{m=1}^t \binom{t}{m} \\
&= (t-1)! (2^t - 1).  \\
\end{align*}
\hfill $\Box$

\section{Remarks and Problems}

We close by sharing several remarks including some potential directions for future research.
 
One observes that when $\gcd(s,t)=1$, the sequence of finite sets of $s \pmod{jt}$-cores approaches the set of $s$-cores as $j \rightarrow \infty$ under the condition $\gcd(s,j)=1$.  To be precise, any set of $s \pmod{t}$-cores is contained within the sets of $s \pmod{t j}$-cores, and the sequence of $s \pmod{t j^k}$-cores is totally ordered by containment.  The limit of the union of initial elements of any infinite subsequence of this collection is the set of $s$-cores.  For example, the set of $3 \pmod{4}$-cores, which are the $(3,7,11,15,19,23\dots)$-cores, is contained within the set of $3 \pmod{8}$-cores, which are the $(3,11,19,27,\dots)$-cores, and within the set of $3 \pmod{20}$-cores, though the second is not contained within the third.   Thus we have a hierarchy of subsets which eventually exhausts the set of $s$-cores.  The hierarchy associated to any infinite sequence of $j$ coprime to $s$ will do so, with the sequences $\{j^k\}$ perhaps being distinguished among these.  Are there any structurally interesting properties of these nested sequences?
 
By examining the general properties of $\left( C_{3(3m+1)}(q^3) \right)^3$ one might find that some congruence holds for all $C_{3k((3m+1)k)}$, or more general cases, as we did for $C_{4(4m+2)}$.  What is the relation between congruences that arise in these generating functions for fixed $s$, and those for $s$-cores?  A congruence for $C_{s(t)}$ when $\gcd(s,t)=d$ can be interpreted as a congruence for a linear combination of $c_d(n-d k_i)$ for some set of values $k_i$; if all of these are divisible by $m$ then the sum will be, but what congruences exist that are not so inherited, if any?

Several questions regarding Fayers' conjecture may be enlightening to further pursue.  Conjecture \ref{fayersconj2} remains open. Moreover, a combinatorial proof relating the Cho-Huh-Sohn formula to Theorem \ref{fayersthm} would also be desirable.

\end{document}